\documentclass[12pt,twoside]{amsart}
\usepackage{mathrsfs,amsthm,amscd,amsmath,amssymb}
\usepackage[colorlinks,linkcolor=green,citecolor=blue, pdfstartview=FitH]{hyperref}
\usepackage{amscd}
\usepackage{actuarialsymbol}
\usepackage{amsfonts}
\usepackage{graphicx}
\usepackage{caption, subcaption}

\title
[Comonotonic and moment matching approximations for sums of lognormal random variables]
{Comonotonic and moment matching approximations for sums of lognormal random variables}
\author{Chunle Huang}
\address{Chunle Huang, School of Mathematics, Hunan University, Changsha, 410082, China.}
\email{2019044@hnu.edu.cn; 402961544@qq.com}

\newtheorem{thm}{Theorem}[section]
\newtheorem{lem}[thm]{Lemma}
\newtheorem{cor}[thm]{Corollary}

\newtheorem{prop}[thm]{Proposition}

\theoremstyle{definition}
\newtheorem{defn}[thm]{Definition}

\newtheorem{ex}[thm]{Example}
\makeatletter
\let\uppercasenonmath\@gobble
\makeatother
\usepackage{graphicx}  
\usepackage{caption}   
\begin{document}
\bibliographystyle{amsalpha+}
\maketitle
\begin{abstract} 
In this paper, based on the concept of weighted distribution, we introduce a kind of new approximations for sums of lognormal random variables, such that they are both comonotonic and moment matching. Numerical results show that the approximation performance of the newly presented approximations is, overall, comparable to the classical comonotonic approximations, but in terms of the right tail of the distribution of the original sum our approximations perform better than the classical comonotonic ones. Another contribution of this article is the establishment of the step-weighting theory for continuous random variables. 
\end{abstract}
\tableofcontents 
\section{Introduction} \label{62773}
In this paper we consider approximations for the distribution and risk measures of a random variable $S$ defined by $S = \sum^n_{i = 1}\alpha_ie^{Z_i}$ where $\alpha_i > 0$ are positive real numbers and $(Z_1, ..., Z_n)^{'}$ is a multivariate random vector. In practice, the quantities $\alpha_i$ can be used to denote future deterministic payments or saving amounts while the random variables $Z_i$ can be used to descirbe financial risks. For instance, if $Z_i$ denotes the stochastic logreturn over the period $[i, n]$ for each $i = 1, ..., n$, then the random variable $S$ can be interpreted as the accumulated value at time $n$ of a series of future deterministic saving amounts $\alpha_i$. On the other hand, if $-Z_i$ denotes the stochastic logreturn over the period $[0, i]$, then $e^{Z_i}$ can be interpreted as the stochastic discount factor over the period $[0, i]$. In this case, the random variable $S$ can be interpreted as the stochastic present value of a series of future deterministic payments $\alpha_i$. Given that Gaussian models are widely used in finance for modeling asset returns, we will mainly assume that ${Z} = (Z_1, ..., Z_n)^{'}$ is a random vector that follows a multivariate normal distribution. In this case, the random variable $S$ becomes a sum of lognormal random variables and it is thus impossible to derive analytical expressions for its distribution function, let along its risk measures. Therefore, it is of great interest to consider its approximations.

In the literature, several techniques for approximating the distribution and risk measures of $S$ have been proposed. One technique is based on the idea of matching the first two moments of $S$ using fixed popular distributions. Since $S$ is the sum of lognormal random variables the first choice is to use a moment matching lognormal approximation for the distribution of $S$. Another choice is to use a moment matching reciprocal Gamma approximation since the present value of a constant continuous perpetuity with lognormal return process, which can be considered as a limiting case of the random variable $S$ as defined above, has a reciprocal Gamma distribution, see \cite{D1990} and \cite{M1997} for the details. For a detailed discussion of the moment matching methods we refer to \cite{KGDD2008}, \cite{KPW2012}, \cite{MP1998}, \cite{MR2000} and \cite{VHD2005b}. 

Another technique is using the concept of comonotonicity. Indeed, based on an idea of \cite{RS1995} in an Asian option context, the authors in the celebrated paper \cite{KDG2000} propose to approximate the distribution function of $S$ by the distribution function of the random variable $S^l$ defined by 
$S^l \overset{\mathrm{def}}{=} \text{E}[S\, |\, \Lambda] = \sum^n_{i = 1}\alpha_i \text{E}[e^{Z_i} \,|\, \Lambda]$
for an appropriate conditioning random variable $\Lambda$. Loosely speaking, this approach allows one to transform the multivariate randomness of $(Z_1, ..., Z_n)$ to the univariate randomness of $\Lambda$. Moreover, an appropriate choice of $\Lambda$ will lead to a comonotonic random vector $(\alpha_1\text{E}[e^{Z_1} | \Lambda], ..., \alpha_n\text{E}[e^{Z_n} | \Lambda])$, which means that all the components $\alpha_i\text{E}[e^{Z_i} | \Lambda]$ are non-decreasing functions of the conditioning random variable $\Lambda$. Risk measures related to the distribution function of $S$ are then approximated by the corresponding risk measures of $S^l$. These approximations are straightforward to calculate, especially for distortion risk measures, taking into account the additivity property of sums of comonotonic random variables. For an extensive overview on the concept of comonotonicity we refer to \cite{DDGK2006} and \cite{Dhaene et al. (2002a)}. For applications of this concept in actuarial science and finance we refer to \cite{DD2007}, \cite{Dhaene et al. (2006)}, \cite{DHL2008}, \cite{DWY2000} and \cite{VDG2003}. 

Moment matching approximations are fairly easy to understand while comonotonic approximations facilitate the computation of risk measures, both of which are very important from practical point of view when considering approximations for the distribution and risk measures of the sum $S$. Therefore, a natural question arises whether there exist approximations for the distribution and risk measures of $S$ with excellent fitting performance such that they are both moment matching and comonotonic. Following the fundamental work of \cite{Dhaene et al. (2002a)}, \cite{Dhaene et al. (2002b)}, \cite{DVG2005}, \cite{KDG2000} and \cite{VCD2008} and making use of the new concept of step-weighted distribution we will show that this can be done without much effort. In fact, our method holds true for any sum of continuous random variables which clearly includes sums of lognormal random variables as a special case. Numerical results show that the approximation performance of the newly presented approximations is, overall, comparable to the classical comonotonic approximations, but in terms of the right tail of the distribution of $S$, a particularly important concept in actuarial science, our approximations perform better than the classical comonotonic ones, see Section \ref{627604} for the details. Given that the fitting performance of the classical comonotonic approximations is better than the classical moment matching approximations for a wide range of parameter values, see \cite{VHD2005b} and \cite{VGD2004} for instance, we can conclude that our approximations generally outperform the classical moment matching ones. 

Another contribution of this article is the proposal of the step-weighting theory. The concept of weighted distribution can be traced back to the late 1970s in the statistical literature, see the original work of Patil, Ord and Rao in \cite{PR1976} and \cite{PR1978}, and has proven to be a useful tool in the study of risk measures after the pioneering work by Furman and Landsman in \cite{FL2005} and \cite{FL2008} and Furman and Zitikis in \cite{FZ2008a}, \cite{FZ2008b} and \cite{FZ2009}. The reader is also referred to the recent work of Denuit in \cite{Denuit2019} and \cite{Denuit2020} for a related introduction to the weighted transform and its properties. In general, the weighted distribution of a random variable is not easy to handle. Therefore, the literature mainly focuses on weighted distributions of random variables associated with monotone weight functions. In particular, much attention has been paid to the case of the identity weight function and the corresponding weighted distribution is known as the size-biased transform, see \cite{AGK2019} and \cite{Qin2017} for instance. In this paper, we will consider the case where the weight function is a step function and the corresponding weight distribution will be called step-weighted distribution. We will show that such weighted distributions possess a number of very nice properties. For instance, for any continuous random variable $X$, the inverse distribution function of the step-weighted distribution can easily be expressed in terms of the inverse distribution function of $X$ based on an auxiliary function, see Theorem \ref{1231219}. Another elegant property is that the operation of comonotonic summing interchanges with the operation of step-weighting for any continuous random variables, see Theorem \ref{0103383}.

The remainder of this paper is organized as follows. Section 2 collects some preliminary results. Section 3 is devoted to the study of the step-weighted distribution of continuous random variables. In Section 4, we consider several risk measures of the step-weighted distribution. In Section 5, we study the step-weighted distribution of comonotonic sums. In Section 6, we give comonotonic and moment matching approximations for sums of continuous random variables. Section 7 conducts numerical analysis and Section 8 concludes the paper. 
\section{Preliminaries}
In this section we recall some preliminary results which will be used in the following. 
\subsection{Comonotonicity}
In this subsection, we recall the concept of comonotonicity. 
\begin{defn}
A random vector $\underline{X} = (X_1, ..., X_n)$ is said to be comonotonic if $\underline{X} \stackrel{d}{=} (F^{-1}_{X_1}(U), ..., F^{-1}_{X_n}(U))$, where $ \stackrel{d}{=}$ stands for equality in distribution and $U \sim U(0, 1)$.
\end{defn}
\noindent By definition the components of a comonotonic random vector $\underline{X}$ are jointly driven by a single random variable transformed by the non-decreasing functions $F^{-1}_{X_1}, ..., F^{-1}_{X_n}$. Comonotonicity has been extensively discussed in \cite{Dhaene et al. (2002a)} and \cite{Dhaene et al. (2002b)}. For its applications in finance and actuarial science we recommend the reader to refer to \cite{DDV2011}, \cite{DVG2005}, \cite{DWY2000} and \cite{WD1998}. The following result is useful when considering comonotonic random vectors. 

\begin{lem} [Theorem 3 of \cite{Dhaene et al. (2002a)}] \label{626115} 
A random vector $\underline{X}$ is comonotonic if and only if there exist a random variable $Z$ and non-decreasing functions $f_i, i = 1, ..., n$, such that $$\underline{X} \stackrel{d}{=} (f_1(Z), ..., f_n(Z)).$$ \end{lem}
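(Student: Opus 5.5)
The argument splits into the two implications, and the only real work is in the converse.

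\emph{Necessity.} Suppose $\underline{X}$ is comonotonic. By definition $\underline{X} \stackrel{d}{=} (F^{-1}_{X_1}(U), \dots, F^{-1}_{X_n}(U))$ with $U \sim U(0,1)$. Take $Z = U$ and $f_i = F^{-1}_{X_i}$. Generalized inverses of distribution functions are non-decreasing, so this direction is immediate.

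\emph{Sufficiency.} Suppose $\underline{X} \stackrel{d}{=} (f_1(Z), \dots, f_n(Z))$ with each $f_i$ non-decreasing. Since comonotonicity is defined through equality in distribution, I will show that the two random vectors
\[
(f_1(Z), \dots, f_n(Z)) \quad \text{and} \quad (F^{-1}_{X_1}(U), \dots, F^{-1}_{X_n}(U))
\]
have the same joint distribution function. The key observation is that for non-decreasing $f_i$, each event $\{f_i(Z) \le x_i\}$ is of the form $\{Z \in I_i\}$, where $I_i$ is a down-set of $\mathbb{R}$, i.e.\ an interval $(-\infty, a_i)$ or $(-\infty, a_i]$. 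Down-sets are totally ordered by inclusion, so
\[
P\bigl(f_1(Z)\le x_1, \dots, f_n(Z)\le x_n\bigr) = P\Bigl(Z \in \bigcap_i I_i\Bigr) = \min_i P(Z \in I_i) = \min_i F_{X_i}(x_i),
\]
using $f_i(Z) \stackrel{d}{=} X_i$ in the last step.

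The same argument applies to $(F^{-1}_{X_1}(U), \dots, F^{-1}_{X_n}(U))$, since each $F^{-1}_{X_i}$ is non-decreasing. Equivalently, one can use the Galois relation $F^{-1}_{X_i}(u) \le x \iff u \le F_{X_i}(x)$ to get
\[
P\bigl(F^{-1}_{X_i}(U) \le x_i \ \forall i\bigr) = P\bigl(U \le \min_i F_{X_i}(x_i)\bigr) = \min_i F_{X_i}(x_i).
\]
Both joint distribution functions therefore equal the Fréchet upper bound $\min_i F_{X_i}(x_i)$. They coincide, and $\underline{X}$ is comonotonic.

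The main obstacle is the step showing that the preimage sets $\{z : f_i(z) \le x_i\}$ are nested down-sets. This needs care when $f_i$ has jumps or flat pieces, because the down-set may be open or closed at its endpoint, and a closed down-set and an open one with the same endpoint must still be comparable. Once total ordering under inclusion is established, the probability of the intersection is the minimum of the probabilities and the rest is bookkeeping.
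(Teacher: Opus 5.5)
Your proof is correct. The paper gives no proof of its own here; it imports the lemma from \cite{Dhaene et al. (2002a)}. There, the equivalence is one link in a cycle of characterizations:
\begin{itemize}
\item a vector of the form $(f_1(Z),\dots,f_n(Z))$ is supported by the totally ordered set $\{(f_1(z),\dots,f_n(z))\}$;
\item on such a comonotonic support $B$ the sets $\{y\in B: y_j\le x_j\}$ are nested, so the joint distribution function equals $\min_j F_{X_j}(x_j)$;
\item this Fr\'echet upper bound is identified with the law of $(F^{-1}_{X_1}(U),\dots,F^{-1}_{X_n}(U))$ through the Galois relation, exactly as in your last display.
\end{itemize}

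You use the same nesting idea but pull it back to the real line, where the sublevel sets $\{z: f_i(z)\le x_i\}$ are down-sets. This means you never need comonotonic supports in $\mathbb{R}^n$, and you land directly on the paper's definition, the quantile representation. What the cited cycle buys in exchange is the equivalence of all the characterizations at once: support, Fr\'echet bound, quantile representation and functional representation.

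The step you flag as the main obstacle needs no case analysis. Let $D_1,D_2$ be down-sets of a totally ordered set and $z\in D_1\setminus D_2$. Every $z'\in D_2$ satisfies $z'<z$, since otherwise $z\in D_2$. Hence $z'\in D_1$ and $D_2\subseteq D_1$, regardless of jumps, flat pieces, or open versus closed endpoints.

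In the necessity direction, $F^{-1}_{X_i}$ is only finite on $(0,1)$. If you want $f_i$ defined on all of $\mathbb{R}$, take $Z=\Phi^{-1}(U)$ and $f_i=F^{-1}_{X_i}\circ\Phi$.
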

\subsection{Convex bounds for sums of random variables} Let $(X_1, ..., X_n)$ be a random vector and $(X_1^c, ..., X_n^c)$ be a comonotonic modification of $(X_1, ..., X_n)$, which means that $(X_1^c, ..., X_n^c)$ is a comonotonic random vector with the same marginal distributions as $(X_1, ..., X_n)$. Let $S = \sum^n_{i = 1}X_i$ be the sum of the components of $(X_1, ..., X_n)$ and $S^c$ be the comonotonic sum of $X_1, ..., X_n$, that is, $$S^c = X_1^c + ... + X_n^c.$$ For any conditioning random variable $\Lambda$ we denote the sum of the conditional expectations $E[X_i \,|\, \Lambda], i = 1, ..., n,$ by $$S^l = E[X_1 \,|\, \Lambda] + ... + E[X_n \,|\, \Lambda] .$$ Note that if there exists a conditioning random variable $\Lambda$ such that all the conditional expectations $E[X_i \,|\, \Lambda], i = 1, ..., n$, are non-decreasing functions of $\Lambda$ then the lower bound $S^l$ will naturally become a comonotonic sum. We introduce the following famous result, concerning convex order bounds for sums of dependent random variables. A detailed proof for this result can be found, for instance, in \cite{Dhaene et al. (2002a)} and \cite{KDG2000}. 

\begin{thm} [Convex bounds for sums of random variables] \label{65401} For any random vector $(X_1, ..., X_n)$ and any conditioning random variable $\Lambda$, we have that 
\begin{equation} \label{65403}
 S^l \leq_{cx} S \leq_{cx} S^c
\end{equation}
\end{thm}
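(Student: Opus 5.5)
We prove the two convex order inequalities in \eqref{65403} separately. Throughout we assume every $X_i$ has finite mean; otherwise convex order is not defined. We use the standard characterization: $X \leq_{cx} Y$ holds if and only if $E[X] = E[Y]$ and $E[(X-d)_+] \leq E[(Y-d)_+]$ for all $d \in \mathbb{R}$. Equivalently, $E[v(X)] \leq E[v(Y)]$ for every convex $v$ for which the expectations exist. Equality of means is immediate in both cases. By the tower property, $E[S^l] = \sum_i E[E[X_i\,|\,\Lambda]] = E[S]$, and $E[S^c] = \sum_i E[X_i^c] = \sum_i E[X_i] = E[S]$ because the marginals coincide.

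For the lower bound $S^l \leq_{cx} S$, note that $S^l = E[S\,|\,\Lambda]$ by linearity of conditional expectation. For any convex $v$, the conditional Jensen inequality gives $v(E[S\,|\,\Lambda]) \leq E[v(S)\,|\,\Lambda]$ almost surely. Taking expectations yields $E[v(S^l)] \leq E[v(S)]$. This settles the left inequality, and it holds for any conditioning variable $\Lambda$, without any comonotonicity assumption.

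For the upper bound $S \leq_{cx} S^c$, we compare stop-loss premiums. Write $S^c = \sum_i F^{-1}_{X_i}(U)$. This is a non-decreasing function of $U$, so its quantile function is additive: $F^{-1}_{S^c}(p) = \sum_i F^{-1}_{X_i}(p)$. Fix $d$. We choose levels $d_i$ with $\sum_i d_i = d$ such that $\{S^c > d\}$ coincides, up to boundary cases, with $\{X_i^c > d_i \text{ for all } i\}$ and $\{S^c \leq d\}$ with $\{X_i^c \leq d_i \text{ for all } i\}$. The natural choice is $d_i = F^{-1}_{X_i}(p_d)$ with $p_d = F_{S^c}(d)$, interpolating between $F^{-1}_{X_i}(p_d)$ and $F^{-1+}_{X_i}(p_d)$ when $d$ falls in a flat or jump region. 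With these levels we have the pointwise equality
\[
(S^c-d)_+ = \sum_{i=1}^n (X_i^c - d_i)_+ ,
\]
since comonotonic components lie simultaneously above or below their thresholds. Taking expectations and using equal marginals gives $E[(S^c-d)_+] = \sum_i E[(X_i-d_i)_+]$. On the other hand, the subadditivity of $x\mapsto x_+$ gives the pointwise inequality $(S-d)_+ = \bigl(\sum_i (X_i - d_i)\bigr)_+ \leq \sum_i (X_i-d_i)_+$. Hence $E[(S-d)_+] \leq E[(S^c-d)_+]$ for every $d$, which completes the right inequality.

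The main obstacle is the rigorous choice of the $d_i$ when $d$ lies at an atom of $S^c$ or outside the range of $S^c$, where the pointwise identity must be checked carefully. I would handle the range issue with the trivial cases $d \le F^{-1+}_{S^c}(0)$ and $d \ge F^{-1}_{S^c}(1)$. For an atom, I would write $d_i = F^{-1}_{X_i}(p_d) + \alpha\bigl(F^{-1+}_{X_i}(p_d) - F^{-1}_{X_i}(p_d)\bigr)$ with a common $\alpha \in [0,1]$ chosen so that $\sum_i d_i = d$. Alternatively, one can cite \cite{Dhaene et al. (2002a)} for this step.
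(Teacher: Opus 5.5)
Your proposal is correct and is essentially the standard proof in the references to which the paper defers this theorem (\cite{Dhaene et al. (2002a)} and \cite{KDG2000}; the paper itself gives no proof). Those references likewise use conditional Jensen for $S^l \leq_{cx} S$. For $S \leq_{cx} S^c$ they combine the comonotonic stop-loss decomposition $E[(S^c-d)_+]=\sum_i E[(X_i-d_i)_+]$ (Theorem 7 of \cite{Dhaene et al. (2002a)}) with the pointwise bound $\bigl(\sum_i (X_i-d_i)\bigr)_+ \leq \sum_i (X_i-d_i)_+$. One step should be made explicit: a common interpolation parameter giving $\sum_i d_i = d$ exists because $F^{-1}_{S^c}(p_d) \leq d \leq F^{-1+}_{S^c}(p_d)$ and both $F^{-1}$ and $F^{-1+}$ are additive for comonotonic sums. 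Also, interpolation is genuinely needed when $d$ lies in a flat stretch of $F_{S^c}$, not at an atom of $S^c$, since at an atom one already has $F^{-1}_{S^c}(p_d) = d$.
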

\noindent From Theorem \ref{65401} the random sums $S^c$ and $S^l$ give convex bounds for the original sum $S$. On the other hand, in case the dependence between the random variables $X_1, ..., X_n$ is unknown or too cumbersome to work with so that it is impossible to derive the distribution function of $S$ exactly, we can regard $S^c$ and $S^l$ as approximations to $S$. Numerical results show that the lower bound $S^l$ provides good approximations to $S$ in general cases, while the performance of the upper bound $S^c$ is not generally good, depending on the dependence between the random variables $X_1, ..., X_n$. For a detailed discussion of the random sums $S^c$ and $S^l$ as approximations to $S$ and their applications in actuarial science and finance we refer to \cite{Dhaene et al. (2002a)}, \cite{Dhaene et al. (2002b)}, \cite{DVG2005}, \cite{KDG2000} and \cite{VCD2008}. 

\subsection{Comonotonic approximations for sums of lognormal random variables}
Consider the random sum $S = \sum^n_{i = 1}\alpha_ie^{Z_i}$ where $\alpha_1, ..., \alpha_n$ are positive real numbers and $(Z_1, ..., Z_n)^{'}$ is a random vector following a multivariate normal distribution with 
\begin{equation} \label{324118}
\text{Cov}[Z_i, Z_j] = \min(i, j)\sigma^2, \,\, i, j = 1, 2, ..., n, \sigma >0.
\end{equation}
Following the papers  \cite{Dhaene et al. (2002a)}, \cite{Dhaene et al. (2002b)}, \cite{DVG2005}, \cite{KDG2000} and \cite{VCD2008} we regard the convex upper and lower bounds $S^c$ and $S^l$ as approximations of $S$. For the upper bound $S^c$, we have 
\begin{equation} \label{612530}
S^c = \sum^n_{i = 1}\alpha_ie^{E[Z_i] + \sigma_{Z_i}\Phi^{-1}(U)}
\end{equation}
where $U$ is a random variable uniformly distributed on the unit interval $(0, 1)$. 
For the lower bound $S^l$, we choose the conditioning random variable $\Lambda$ such that 
\begin{equation} \label{324076}
\Lambda = \sum^n_{j = 1}\lambda_jZ_j
\end{equation}
where $\lambda_1, ..., \lambda_n$ are real numbers which will be specified later. We denote the mean and variance of $\Lambda$ by $\text{E}[\Lambda]$ and $\sigma_{\Lambda}^2$, respectively. Then $S^l = \text{E} [S\, |\, \Lambda]$ can be written as
\begin{equation}\label{219079}
S^l = \sum^n_{i = 1}\alpha_i \exp\Big(\text{E}[Z_i] + \frac{1}{2}(1 - r_i^2)\sigma_{Z_i}^2 + r_i\sigma_{Z_i}\Phi^{-1}(V)\Big)
\end{equation}
where $V = \Phi(\frac{\Lambda - \text{E}[\Lambda]}{\sigma_{\Lambda}})$ is a random variable uniformly distributed on $(0, 1)$ and $r_i$ is the correlation coefficient between $Z_i$ and $\Lambda$. 
Provided that all coefficients $r_i$ are positive, the terms in $S^l$ will be non-decreasing functions of the same random variable $V$. Hence, by Lemma \ref{626115}, $S^l$ will be a comonotonic sum in this case. This implies that the quantiles and conditional tail expectations related to $S^l$ can be computed by summing the corresponding risk measures for
the marginals involved. 
\begin{ex} [The Taylor-based approximation] \label{621193}
In the celebrated papers \cite{Dhaene et al. (2002a)}, \cite{Dhaene et al. (2002b)} and \cite{KDG2000} the authors propose to choose the conditioning random variable $\Lambda$ as a linear combination of $Z_1, ..., Z_n$ with the coefficients $\lambda_j$ given by 
\begin{equation} \label{324119}
\lambda_j^{TB} = \alpha_je^{\text{E}[Z_j]}, \,\, j = 1, ..., n.
\end{equation}
This choice makes $\Lambda$ a linear transformation of a first-order approximation of the sum $S$. 
In this case $S^l$ will be a good approximation to $S$, provided $\sigma$ is sufficiently small. More importantly, under the condition (\ref{324118}) this choice makes $S^l$ a comonotonic sum. 
We call the approximation based on (\ref{324119}) the Taylor-based approximation. 
\end{ex}

\begin{ex} [The maximal variance approximation] \label{621204}
Apart from the Taylor-based approximation, the authors in \cite{VCD2008}, \cite{VDG2005a} and \cite{VHD2005b} propose to choose the conditioning random variable $\Lambda$ as a linear combination of $Z_i$'s with the coefficients $\lambda_j$ given by 
\begin{equation} \label{324135}
\lambda^{MV}_j = \alpha_j\text{E}[e^{Z_j}], \,\, j = 1, ..., n. 
\end{equation} 
This choice will ensure that the first order approximation of $\text{Var}[S^l]$ is maximized. 
Moreover, under the condition (\ref{324118}) it defines a comonotonic approximation to $S$. We call the approximation $S^l$ based on (\ref{324135}) the maximal variance approximation. 
\end{ex} 

\section{Step-weighted distribution} \label{26112} 
This section is devoted to the study of step-weighted distribution, a new concept which will be seen useful when considering comonotonic approximations for sums of continuous random variables. 
\subsection{Definition of step-weighted distribution}
In this subsection, we introduce a new class of weight functions, named step-weight functions, and the corresponding weighted distributions will be called step-weighted distributions. We will show that such weighted distributions possess a number of nice properties. For instance, for any continuous random variable $X$, the inverse distribution function of the step-weighted version of $X$ can be easily expressed in terms of the inverse distribution function of $X$ based on an auxiliary function, see Theorem \ref{1231219} for the details. Throughout this paper, $m \in \mathbb{N}^+$ will always denote a positive integer. Then we make the following definition. 

\begin{defn} \label{52378}
Let $A = (a_1, ..., a_m)^{'}$ and $Q = (q_1, ..., q_m)^{'}$ be two numerical vectors with $\min(a_1, ..., a_m) \geq 0, \max(a_1, ..., a_m) > 0$ and $0 = q_0 < q_1 < q_2 < ... < q_m = 1$. Let $X$ be a continuous random variable which means that the distribution function $F_X(x)$ of $X$ is continuous on $(-\infty, \infty)$. Then we call $\omega_{A, Q, X}(x)$ the step-weight function of the random variable $X$ associated with the numerical vectors $A$ and $Q$, abbreviated as $\omega_{A, Q}$ when there is no ambiguity, if it is defined by 
$$\omega_{A, Q, X}(x) = \begin{cases} a_1, & F^{-1+}_{X}(q_0) \leq x < F_X^{-1}(q_1), \\  
a_k, & F_X^{-1}(q_{k - 1}) \leq x < F_X^{-1}(q_k), \,\, k = 2, ..., m - 1, \\
a_m, & F_X^{-1}(q_{m - 1}) \leq x \leq F^{-1}_{X}(q_m), \\ 
0, & \text{otherwise}. 
\end{cases}$$
\end{defn}

The following proposition shows that the expectation $E[\omega_{A, Q}(X)]$ always exists for any continuous random variable $X$ and it is independent of the distribution of $X$. 
\begin{prop} \label{123084}
Let $A = (a_1, ..., a_m)^{'}$ and $Q = (q_1, ..., q_m)^{'}$ be two numerical vectors with $\min(a_1, ..., a_m) \geq 0, \max(a_1, ..., a_m) > 0$ and $0 = q_0 < q_1 < q_2 < ... < q_m = 1$. Let $X$ be a continuous random variable and $\omega_{A, Q}$ the step-weight function of $X$. Then it holds that $$0 < E[\omega_{A, Q}(X)] = \sum^m_{i = 1}a_i(q_{i} - q_{i - 1}) < \infty.$$ In particular, the expectation $E[\omega_{A, Q}(X)]$ is independent of the distribution of $X$. 
\end{prop}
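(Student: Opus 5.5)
We will write $\omega_{A, Q}(X)$ as a finite linear combination of indicator functions of the intervals in Definition \ref{52378}. Then we use that a continuous distribution function assigns each such interval exactly its quantile increment. Concretely, set $I_1 = [F^{-1+}_X(q_0), F^{-1}_X(q_1))$, $I_k = [F^{-1}_X(q_{k-1}), F^{-1}_X(q_k))$ for $k = 2, \ldots, m-1$, and $I_m = [F^{-1}_X(q_{m-1}), F^{-1}_X(q_m)]$. Then
$$\omega_{A, Q}(X) = \sum_{k=1}^m a_k \mathbf{1}_{\{X \in I_k\}},$$
since these intervals are pairwise disjoint, because $F^{-1}_X$ is non-decreasing. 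This random variable is bounded by $\max_k a_k$, so its expectation exists and is finite. Linearity then gives $E[\omega_{A, Q}(X)] = \sum_k a_k P(X \in I_k)$.

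The key step is to show $P(X \in I_k) = q_k - q_{k-1}$ for every $k$. For an interior endpoint $q \in (0,1)$, continuity of $F_X$ gives $F_X(F^{-1}_X(q)) = q$, where $F^{-1}_X(q) = \inf\{x : F_X(x) \geq q\}$. The infimum is attained by right-continuity, and $F_X(x) < q$ for $x$ below it, so left-continuity forces equality. Since $F_X$ has no atoms, $P(X < x) = F_X(x)$. Hence for $2 \le k \le m-1$ we get $P(X \in I_k) = F_X(F^{-1}_X(q_k)) - F_X(F^{-1}_X(q_{k-1})) = q_k - q_{k-1}$.

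The main obstacle is the two boundary intervals, where the endpoints $F^{-1+}_X(0)$ and $F^{-1}_X(1)$ may be $-\infty$ or $+\infty$. Here I would interpret $[F^{-1+}_X(0), \cdot)$ and $[\cdot, F^{-1}_X(1)]$ as extending to the essential infimum and supremum of $X$. If these are infinite, the intervals are understood as open at the infinite end. In either case $P(X < F^{+-}_X(0)\text{-end}) = 0$ and $P(X > F^{-1}_X(1)) = 0$, because the essential infimum and supremum carry full mass between them. Continuity also gives zero mass to the finite endpoints themselves. This yields $P(X \in I_1) = F_X(F^{-1}_X(q_1)) - 0 = q_1$ and $P(X \in I_m) = 1 - q_{m-1}$. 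The degenerate case $m = 1$ is handled the same way, with total mass $1$.

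Summing gives $E[\omega_{A, Q}(X)] = \sum_{i=1}^m a_i (q_i - q_{i-1})$, which visibly does not depend on the law of $X$. Finiteness is clear. Positivity holds because all $a_i \ge 0$, some $a_j > 0$, and $q_j - q_{j-1} > 0$ by the strict ordering of the $q_i$.
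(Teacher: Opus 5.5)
Your proof is correct and takes essentially the paper's route. You split $E[\omega_{A,Q}(X)]$ over the $m$ intervals of Definition~\ref{52378} and use $F_X(F_X^{-1}(q)) = q$, which follows from continuity of $F_X$, to show each interval carries mass $q_k - q_{k-1}$. Your explicit handling of boundedness, atomless endpoints and the two boundary intervals fills in details the paper leaves implicit; no special interpretation is needed there, since $F^{-1+}_X(0)$ and $F^{-1}_X(1)$ are exactly the essential infimum and supremum of $X$.
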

\begin{proof} 
By assumption, the distribution function $F_X(x)$ of the random variable $X$ is continuous, which implies that $$F_{X}(F_X^{-1}(q)) = q \,\, \text{for any} \,\, q \in (0, 1),$$ see \cite{DDGK2006} and \cite{Dhaene et al. (2002a)} for instance, where $F_X^{-1}(q)$ is the inverse distribution function of $X$. So, from the definition of step-weight function we find that 
$$E[\omega_{A, Q}(X)] 
= \sum_{i = 1}^m\int_{F^{-1}_X(q_{i - 1})}^{F^{-1}_{X}(q_{i})}\omega_{A, Q}(x)dF_X(x) 
= \sum^m_{i = 1}a_i(q_{i} - q_{i - 1}).$$
This completes the proof. 
\end{proof}

Now, we introduce the concept of step-weighted distribution as follows. 
\begin{defn} \label{529103}
Let $A = (a_1, ..., a_m)^{'}$ and $Q = (q_1, ..., q_m)^{'}$ be two numerical vectors with $\min(a_1, ..., a_m) \geq 0, \max(a_1, ..., a_m) > 0$ and $0 = q_0 < q_1 < q_2 < ... < q_m = 1$. Let $X$ be a continuous random variable with distribution function $F_X(x)$. We call $X_{A, Q}$ the step-weighted version of $X$ associated with the numerical vectors $A$ and $Q$ if the distribution function $F_{X_{A, Q}}(x)$ of $X_{A, Q}$ is determined by $$dF_{X_{A, Q}}(x) = \frac{\omega_{A, Q}(x)}{E[\omega_{A, Q}(X)]}dF_X(x), \,\, \text{for} \,\, x \in \mathbb{R}.$$ Furthermore, the distribution of $X_{A, Q}$ will be called the step-weighted distribution of $X$ associated with the numerical vectors $A$ and $Q$. 
\end{defn}

\subsection{Properties of step-weighted distribution} 
Let $r$ be a positive integer. In the following proposition we compute the r-th order origin moment $E[(X_{A, Q})^r]$ of the step-weighted distribution. We can see from the proposition that the origin moment $E[(X_{A, Q})^r]$ of the step-weighted distribution can be expressed as the weighted average of the conditional expectations of $X^r$ on the intervals $(F^{-1}_X(q_{k - 1}), F^{-1}_X(q_k))$ with the weights given by 
\begin{equation} \label{529108}
w_k := \frac{a_k(q_k - q_{k - 1})}{\sum^m_{i = 1}a_i(q_{i} - q_{i - 1})}, \,\, k = 1, ..., m.
\end{equation}
\begin{prop} \label{213201}
Let $A = (a_1, ..., a_m)^{'}$ and $Q = (q_1, ..., q_m)^{'}$ be two numerical vectors with $\min(a_1, ..., a_m) \geq 0, \max(a_1, ..., a_m) > 0$ and $0 = q_0 < q_1 < q_2 < ... < q_m = 1$. Let $X$ be a continuous random variable with inverse distribution function $F_X^{-1}(t)$ and $w_k$ be the weights defined by (\ref{529108}). Then for any positive integer $r$ it holds that  
$$E[(X_{A, Q})^r] = \sum^m_{k = 1}w_kE[X^r \,|\, F^{-1}_X(q_{k - 1}) \leq X \leq F^{-1}_X(q_k)].$$
\end{prop}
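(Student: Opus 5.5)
Unwinding Definition \ref{529103}, $E[(X_{A, Q})^r]$ is an integral against $dF_X$ weighted by $\omega_{A, Q}/E[\omega_{A, Q}(X)]$. On each interval $[F^{-1}_X(q_{k-1}), F^{-1}_X(q_k))$ the weight is the constant $a_k$, so the integral splits into $m$ pieces. Each piece is then recognized as a conditional expectation times the probability of its interval. We do not claim integrability of $X^r$ in general; the identity is understood whenever the relevant conditional moments exist (possibly infinite).

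\textbf{Step 1: split the integral.} Write
$$E[(X_{A, Q})^r] = \frac{1}{E[\omega_{A, Q}(X)]}\int_{\mathbb{R}} x^r \omega_{A, Q}(x)\,dF_X(x).$$
Outside $[F^{-1+}_X(0), F^{-1}_X(1)]$ the weight vanishes, and that interval is the support of $X$ anyway. Splitting the remaining range at the points $F^{-1}_X(q_k)$ gives
$$E[(X_{A, Q})^r] = \frac{1}{E[\omega_{A, Q}(X)]}\sum_{k=1}^m a_k \int_{F^{-1}_X(q_{k-1})}^{F^{-1}_X(q_k)} x^r\,dF_X(x).$$

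\textbf{Step 2: endpoints are negligible.} Since $F_X$ is continuous, each single point has $dF_X$-measure zero. So it does not matter whether the intervals are taken closed or half-open, and the closed-interval conditioning in the statement is harmless.

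\textbf{Step 3: convert to conditional expectations.} As in the proof of Proposition \ref{123084}, continuity of $F_X$ gives $F_X(F^{-1}_X(q)) = q$. Hence
$$P\big(F^{-1}_X(q_{k-1}) \le X \le F^{-1}_X(q_k)\big) = q_k - q_{k-1} > 0.$$
The endpoints $q_0 = 0$ and $q_m = 1$ are handled by the convention $F^{-1+}_X(0)$ and by $F_X(F^{-1}_X(1)) = 1$. It follows that
$$\int_{F^{-1}_X(q_{k-1})}^{F^{-1}_X(q_k)} x^r\,dF_X(x) = (q_k - q_{k-1})\, E\big[X^r \mid F^{-1}_X(q_{k-1}) \le X \le F^{-1}_X(q_k)\big].$$

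\textbf{Step 4: identify the weights.} Substitute $E[\omega_{A, Q}(X)] = \sum_i a_i(q_i - q_{i-1})$ from Proposition \ref{123084}. The coefficient of each conditional expectation is then exactly $w_k$ from (\ref{529108}).

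The only delicate point is Step 3 at the extreme quantiles, where $F^{-1}_X(0)$ or $F^{-1}_X(1)$ may be $\mp\infty$ and the probability of each slice must be verified carefully. Continuity of $F_X$ settles this. Slices with $a_k = 0$ contribute nothing, which is consistent with $w_k = 0$.
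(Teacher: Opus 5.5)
Your proposal is correct and follows essentially the same route as the paper's proof. You split $\int x^r\,\omega_{A,Q}(x)\,dF_X(x)/E[\omega_{A,Q}(X)]$ over the $m$ quantile slices on which $\omega_{A,Q}\equiv a_k$, use $F_X(F_X^{-1}(q))=q$ to write each slice integral as $(q_k-q_{k-1})$ times the conditional moment, and substitute $E[\omega_{A,Q}(X)]=\sum_i a_i(q_i-q_{i-1})$ from Proposition \ref{123084} to obtain $w_k$. Your extra care with the endpoint conventions and with possibly infinite conditional moments in the two extreme slices goes slightly beyond the paper, which leaves these points implicit.
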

\begin{proof}
From Definition \ref{52378}, Definition \ref{529103} and Proposition \ref{123084} we find that 
\begin{align*} 
E[(X_{A, Q})^r] &= \int^{\infty}_{-\infty}x^rdF_{X_{A, Q}}(x) \\ 
&= \sum^m_{k = 1}\frac{a_k}{E[\omega_{A, Q, X}(X)]}\int^{F^{-1}_X(q_k)}_{F^{-1}_X(q_{k - 1})}x^rdF_X(x) \\ 
&= \sum^m_{k = 1}\frac{a_k(q_k - q_{k - 1})}{\sum^m_{i = 1}a_i(q_{i} - q_{i - 1})}E[X^r \,|\, F^{-1}_X(q_{k - 1}) \leq X \leq F^{-1}_X(q_k)] \\ 
&= \sum^m_{k = 1}w_kE[X^r \,|\, F^{-1}_X(q_{k - 1}) \leq X \leq F^{-1}_X(q_k)]. 
\end{align*}
This completes the proof. 
\end{proof}

Next, we consider inverse distribution functions of step-weighted distribution. To this end, we need to establish the following lemmas, including Lemma \ref{112074}, Lemma \ref{1231168} and Lemma \ref{112087}, which will be frequently used in the following sections.

\begin{lem} \label{112074}
Let $X$ be a random variable, not necessarily continuous, and $\omega$ a nonnegative weight function such that $0 < E[\omega(X)] < \infty$. Define the function 
\begin{equation} \label{120277}
G_{X, \omega}(t) = \frac{1}{E[\omega(X)]}\int^t_0 \omega(F^{-1}_X(u))du, \,\, t \in [0, 1]
\end{equation}
where $F_X^{-1}(u)$ is the inverse distribution function of $X$. Then we have that 
\begin{itemize} 
\item $G_{X, \omega}(0) = 0$ and $G_{X, \omega}(1) = 1$, 
\item $G_{X, \omega}(t)$ is increasing in $t$, 
\item $G_{X, \omega}(t)$ is continuous in $t$. 
\end{itemize}
In other words, $G_{X, \omega}(t)$ is a continuous distortion function on the unit interval $[0, 1]$. 
\end{lem}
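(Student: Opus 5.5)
The whole argument rests on one identity:
\begin{equation*}
\int_0^1 \omega(F^{-1}_X(u))\,du = E[\omega(X)].
\end{equation*}
This holds because, for $U \sim U(0,1)$, the random variable $F^{-1}_X(U)$ has the same distribution as $X$. This quantile representation is valid for every random variable, continuous or not, which is why the lemma needs no continuity assumption. Since $\omega \geq 0$ and $E[\omega(X)] < \infty$, the integrand $u \mapsto \omega(F^{-1}_X(u))$ is nonnegative and Lebesgue integrable on $(0,1)$. It is also measurable, since $F^{-1}_X$ is monotone and $\omega$ is Borel, as it must be for $E[\omega(X)]$ to make sense. Hence the integral defining $G_{X,\omega}(t)$ is finite for every $t \in [0,1]$. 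The endpoint values then follow at once: $G_{X,\omega}(0)=0$ because the integral over a degenerate interval vanishes, and $G_{X,\omega}(1)=E[\omega(X)]/E[\omega(X)]=1$ by the identity above.

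For monotonicity, take $0 \le s < t \le 1$. Then
\begin{equation*}
G_{X,\omega}(t)-G_{X,\omega}(s)=\frac{1}{E[\omega(X)]}\int_s^t \omega(F^{-1}_X(u))\,du \ge 0,
\end{equation*}
because the integrand is nonnegative and $E[\omega(X)]>0$. So "increasing" is established in the non-decreasing sense, which is all a distortion function requires. Strict increase cannot hold in general, since $\omega$ may vanish on a set of positive measure (for instance a step weight with some $a_k = 0$). I would state explicitly that "increasing" is meant in the weak sense.

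For continuity, I would use absolute continuity of the Lebesgue integral. Since $\omega(F^{-1}_X(\cdot)) \in L^1(0,1)$, we have
\begin{equation*}
|G_{X,\omega}(t)-G_{X,\omega}(s)| = \frac{1}{E[\omega(X)]}\int_{\min(s,t)}^{\max(s,t)} \omega(F^{-1}_X(u))\,du \to 0 \quad \text{as } |t-s|\to 0.
\end{equation*}
Alternatively, one can apply dominated convergence to $\mathbf{1}_{[0,t]}(u)\,\omega(F^{-1}_X(u))$ as $t$ varies. Both routes give uniform continuity on $[0,1]$.

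Combining the three properties, $G_{X,\omega}$ is a continuous, non-decreasing map of $[0,1]$ onto $[0,1]$ with $G_{X,\omega}(0)=0$ and $G_{X,\omega}(1)=1$, that is, a continuous distortion function. The only step requiring any care is the opening identity, together with the integrability and measurability of the composed integrand. Once the quantile-transform fact is invoked, everything else is routine.
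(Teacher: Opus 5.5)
Your proof is correct and follows the same route as the paper: the quantile representation $X \stackrel{d}{=} F_X^{-1}(U)$ gives $G_{X,\omega}(1)=1$, nonnegativity of $\omega$ gives monotonicity, and integrability of $\omega(F_X^{-1}(\cdot))$ on $(0,1)$ gives continuity. You simply supply details the paper leaves as ``easy to check'': measurability, absolute continuity of the integral, and the weak sense of ``increasing''.
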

\begin{proof} 
It is easy to check that $G_{X, \omega}(0) = 0$ and $G_{X, \omega}(1) = 1$. $G_{X, \omega}(t)$ is increasing in $t$ because the weight function $\omega(x)$ is assumed to be nonnegative. $G_{X, \omega}(t)$ is continuous because $\omega(F^{-1}_X(u))$ is an integrable function on the unit interval $[0, 1]$. 
\end{proof}
In particular, we have 
\begin{lem} \label{1231168}
Let $A = (a_1, ..., a_m)^{'}$ and $Q = (q_1, ..., q_m)^{'}$ be two numerical vectors with $\min(a_1, ..., a_m) \geq 0, \max(a_1, ..., a_m) > 0$ and $0 = q_0 < q_1 < q_2 < ... < q_m = 1$. Let $X$ be a continuous random variable, $\omega_{A, Q}$ the step-weight function of $X$ associated with the numerical vectors $A$ and $Q$, and $a = \sum^m_{i = 1}a_i(q_{i} - q_{i - 1})$. Then it holds that 
\begin{equation} \label{529202}
G_{X, \omega_{A, Q}}(t) = a^{-1}\Big(\sum^{k - 1}_{i = 1}a_{i}(q_{i} - q_{i - 1}) + a_k(t - q_{k - 1})\Big), \,\, t \in [q_{k - 1}, q_{k}], k = 1, ..., m 
\end{equation}
with $\sum_{i = 1}^{0} = 0$ by convention. In particular, the distortion function $G_{X, \omega_{A, Q}}(t)$ is independent of the distribution of $X$, which will be denoted by $G_{A, Q}(t)$ in the following. 
\end{lem}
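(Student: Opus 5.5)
The plan is to compute the integrand $\omega_{A, Q}(F^{-1}_X(u))$ in (\ref{120277}) explicitly and then integrate a step function. The normalising constant is already known: by Proposition \ref{123084}, $E[\omega_{A, Q}(X)] = a$. It therefore suffices to show that
$$\omega_{A, Q}(F^{-1}_X(u)) = a_k \quad \text{for almost every } u \in (q_{k - 1}, q_k), \,\, k = 1, ..., m.$$

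To verify this, fix $u \in (q_{k - 1}, q_k)$. Since $F^{-1}_X$ is non-decreasing, we get
$$F^{-1}_X(q_{k - 1}) \leq F^{-1}_X(u) \leq F^{-1}_X(q_k).$$
For $k = 1$ the left endpoint should be read as $F^{-1+}_X(0)$; any $u > 0$ satisfies $F^{-1}_X(u) \geq F^{-1+}_X(0)$, so this case is covered as well. For $k = m$, the definition of $\omega_{A, Q}$ uses a closed interval, so $\omega_{A, Q}(F^{-1}_X(u)) = a_m$ follows immediately.

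The main obstacle is the right endpoint when $k < m$. The interval in Definition \ref{52378} is half-open there, and $F^{-1}_X(u)$ could equal $F^{-1}_X(q_k)$ if $F^{-1}_X$ is flat on a set containing $u$ and $q_k$. Continuity of $F_X$ rules this out. As noted in the proof of Proposition \ref{123084}, $F_X(F^{-1}_X(p)) = p$ for all $p \in (0, 1)$. So if $F^{-1}_X(u) = F^{-1}_X(q_k)$, applying $F_X$ to both sides gives $u = q_k$, which contradicts $u < q_k$. Hence $F^{-1}_X$ is strictly increasing on $(0, 1)$, and $F^{-1}_X(u) < F^{-1}_X(q_k)$. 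The same injectivity shows that the reverse situation, where $F^{-1}_X(u)$ coincides with $F^{-1}_X(q_{k-1})$, can only happen at $u = q_{k-1}$. That point has measure zero, and in any case it is assigned to the correct interval because the left endpoint is closed.

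With the integrand identified as the step function equal to $a_i$ on $(q_{i - 1}, q_i)$, take $t \in [q_{k - 1}, q_k]$ and split $\int_0^t$ into the full intervals $[q_{i - 1}, q_i]$ for $i < k$ and the partial interval $[q_{k - 1}, t]$. Each full interval contributes $a_i(q_i - q_{i - 1})$ and the partial one contributes $a_k(t - q_{k - 1})$. Dividing by $a$ gives (\ref{529202}). The formula involves only $A$ and $Q$, so the distortion function does not depend on the distribution of $X$, which justifies writing $G_{A, Q}$.
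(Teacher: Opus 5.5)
Your proof is correct and takes the same route as the paper, which only states that the formula follows by direct calculation from the definition of $\omega_{A,Q}$ and the continuity of $X$. You fill in the details that one-line argument leaves implicit: using $F_X(F_X^{-1}(p)) = p$, you get $F_X^{-1}(u) < F_X^{-1}(q_k)$ for $u < q_k$, so $\omega_{A,Q}(F_X^{-1}(u)) = a_k$ on $(q_{k-1}, q_k)$ despite the half-open intervals.
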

\begin{proof} 
The statement of Lemma \ref{1231168} follows immediately from direct calculations given the definition of step-weight function and the continuity assumption of $X$. 
\end{proof}

\begin{lem} \label{112087}
Let $X$ be a random variable, not necessarily continuous, $\omega(x)$ a nonnegative weight function such that $0 < E[\omega(X)] < \infty$ and ${X_\omega}$ the weighted version of $X$ associated with $\omega$. Then we have that 
\begin{itemize} 
\item $F_{X_\omega}(x)  = G_{X, \omega}(F_X(x)) \,\, \text{for any} \,\, x \in \mathbb{R}$, 
\item $F_{X_\omega}^{-1}(p) = F_X^{-1}(G^{-1}_{X, \omega}(p))$ for any $p \in (0, 1)$, 
 \item $F_{X_\omega}^{-1+}(p) = F_X^{-1+}(G^{-1+}_{X, \omega}(p))$ for any $p \in (0, 1)$, 
\end{itemize}
where $G_{X, \omega}(t)$ is the continuous distortion function as given in Lemma \ref{112074}. 
\end{lem}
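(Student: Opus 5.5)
The first identity comes from a quantile representation of $X$, and the two quantile formulas then follow from it by generalized-inverse arguments.

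For the first bullet, let $U \sim U(0,1)$. Since $X \stackrel{d}{=} F_X^{-1}(U)$, the weighted distribution satisfies
$$F_{X_\omega}(x) = \frac{E[\omega(X)\mathbf{1}\{X \le x\}]}{E[\omega(X)]} = \frac{1}{E[\omega(X)]}\int_0^1 \omega(F_X^{-1}(u))\,\mathbf{1}\{F_X^{-1}(u)\le x\}\,du .$$
We then use the standard Galois equivalence $F_X^{-1}(u) \le x \iff u \le F_X(x)$, valid for $u\in(0,1)$ with $F_X^{-1}(u)=\inf\{y: F_X(y)\ge u\}$ and right-continuous $F_X$. With it the indicator becomes $\mathbf{1}\{u \le F_X(x)\}$, and the integral equals $G_{X,\omega}(F_X(x))$ by (\ref{120277}). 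No continuity of $X$ is needed here, which matches the statement.

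For the second bullet, write $G = G_{X,\omega}$. This $G$ is a continuous non-decreasing distortion by Lemma \ref{112074}, though not necessarily strictly increasing. By definition,
$$F_{X_\omega}^{-1}(p) = \inf\{x : G(F_X(x)) \ge p\}.$$
Since $G$ is continuous and non-decreasing, $G(s) \ge p \iff s \ge G^{-1}(p)$, where $G^{-1}(p) = \inf\{s : G(s)\ge p\}$; continuity makes the set $\{s: G(s)\ge p\}$ closed, so its infimum is attained. Hence $\{x : G(F_X(x))\ge p\} = \{x: F_X(x) \ge G^{-1}(p)\}$, and its infimum is $F_X^{-1}(G^{-1}(p))$. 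One boundary point needs a remark: for $p\in(0,1)$, continuity and $G(0)=0$ give $G^{-1}(p)\in(0,1]$. The edge case $G^{-1}(p)=1$ is handled by the convention for $F_X^{-1}(1)$.

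For the third bullet, recall $F^{-1+}(p) = \sup\{x : F(x) \le p\}$. The key equivalence is now $G(s) \le p \iff s \le G^{-1+}(p)$ with $G^{-1+}(p)=\sup\{s: G(s)\le p\}$. This again uses continuity and monotonicity of $G$: the set $\{s: G(s)\le p\}$ is a closed interval $[0, G^{-1+}(p)]$. Therefore $\{x: G(F_X(x))\le p\}=\{x: F_X(x)\le G^{-1+}(p)\}$, and taking suprema gives $F_X^{-1+}(G^{-1+}(p))$.

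The main obstacle is this bookkeeping with generalized inverses where $G$ has flat pieces, which happens whenever $\omega\circ F_X^{-1}$ vanishes on an interval. In the same situation $F_X$ may have jumps or flats. I would verify each set equivalence carefully and check that no strict and non-strict inequalities get swapped. Continuity of $G$ is what prevents that, and it is exactly what Lemma \ref{112074} supplies.
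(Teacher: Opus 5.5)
Your proposal is correct and follows essentially the same route as the paper: the representation $X \stackrel{d}{=} F_X^{-1}(U)$ together with $F_X^{-1}(u)\le x \iff u\le F_X(x)$ (which the paper uses implicitly) gives the first identity. The equivalences $G^{-1}(p)\le s \iff p\le G(s)$ and $s\le G^{-1+}(p) \iff G(s)\le p$ for the continuous distortion $G_{X,\omega}$ then rewrite the defining sets of $F_{X_\omega}^{-1}(p)$ and $F_{X_\omega}^{-1+}(p)$ exactly as you do, and your edge case $G^{-1}(p)=1$ in fact never occurs, since continuity of $G$ with $G(1)=1>p$ forces $G^{-1}(p)<1$ (and symmetrically $G^{-1+}(p)>0$).
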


\begin{proof} 
By the definition of weighted distribution and the well-known identify $$X  \stackrel{d}{=} F_X^{-1}(U),$$ see \cite{Dhaene et al. (2002a)} for instance, where $ \stackrel{d}{=}$ stands for equality in distribution and $U$ is a random variable uniformly distributed on the unit interval $(0, 1)$, we have that 
\begin{align*} 
F_{X_\omega}(x) &= \frac{E[\mathbb{I}\{X \leq x\}\omega(X)]}{E[\omega(X)]} = \frac{E[\mathbb{I}\{F_X^{-1}(U) \leq x\}\omega(F_X^{-1}(U))]}{E[\omega(X)]} \\ 
& = \frac{\int^1_0\mathbb{I}\{F^{-1}_X(u) \leq x\}\omega(F_X^{-1}(u))du}{E[\omega(X)]} 
= G_{X, \omega}(F_X(x)), \,\, \text{for} \,\, x \in \mathbb{R}.  
\end{align*}
Combining this and the following well-known equivalences for continuous distribution functions 
\begin{itemize} 
\item $G_{X, \omega}^{-1}(q) \leq x \iff q \leq G_{X, \omega}(x)$ for any $x \in \mathbb{R}, q \in [0, 1]$ 
\item $x \leq G^{-1+}_{X, \omega}(q) \iff G_{X, \omega}(x) \leq q$ for any $x \in \mathbb{R}, q \in [0, 1]$
\end{itemize} 
see \cite{DDGK2006} for the details, we find that 
\begin{align*} 
F_{X_\omega}^{-1}(p) &= \inf_{x \in \mathbb{R}}\{x \,|\, F_{X_\omega}(x) \geq p\}= \inf_{x \in \mathbb{R}}\{x \,|\, G_{X, \omega}(F_X(x))\geq p\}\\ 
&= \inf_{x \in \mathbb{R}}\{x \,|\, F_X(x)\geq G_{X, \omega}^{-1}(p)\} = F_X^{-1}(G^{-1}_{X, \omega}(p))
\end{align*}
and 
\begin{align*} 
F_{X_\omega}^{-1+}(p) &= \sup_{x \in \mathbb{R}}\{x \,|\, F_{X_\omega}(x) \leq p\} = \sup_{x \in \mathbb{R}}\{x \,|\, G_{X, \omega}(F_X(x))\leq p\}\\ 
&= \sup_{x \in \mathbb{R}}\{x \,|\, F_X(x)\leq G_{X, \omega}^{-1+}(p)\} = F_X^{-1+}(G^{-1+}_{X, \omega}(p))
\end{align*}
for any $p \in (0, 1)$. This completes the proof. 
\end{proof}
Based on the above preliminary lemmas we obtain the following theorem, expressing the inverse distribution functions of the step-weighted distribution in terms of the inverse distribution functions of the original distribution. 
\begin{thm} \label{1231219}
Let $A = (a_1, ..., a_m)^{'}$ and $Q = (q_1, ..., q_m)^{'}$ be two numerical vectors with $\min(a_1, ..., a_m) \geq 0, \max(a_1, ..., a_m) > 0$ and $0 = q_0 < q_1 < q_2 < ... < q_m = 1$. Let $X$ be a continuous random variable and $X_{A, Q}$ the step-weighted version of $X$ associated with the numerical vectors $A$ and $Q$. Then it holds that 
\begin{itemize} 
\item $F_{X_{A, Q}}(x)  = G_{A, Q}(F_X(x)) \,\, \text{for any} \,\, x \in \mathbb{R}$, 
\item $F_{X_{A, Q}}^{-1}(p) = F_X^{-1}(G^{-1}_{A, Q}(p))$ for any $p \in (0, 1)$, 
 \item $F_{X_{A, Q}}^{-1+}(p) = F_X^{-1+}(G^{-1+}_{A, Q}(p))$ for any $p \in (0, 1)$, 
\end{itemize} 
where $G_{A, Q}(t)$ is the continuous distortion function as given in Lemma \ref{1231168}. 
\end{thm}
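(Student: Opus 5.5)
The plan is to derive the theorem as the special case $\omega = \omega_{A, Q}$ of Lemma \ref{112087}, and then to replace $G_{X, \omega_{A, Q}}$ by $G_{A, Q}$ using Lemma \ref{1231168}. First we check the hypotheses of Lemma \ref{112087}. The step-weight function $\omega_{A, Q}$ is nonnegative, since all $a_i \geq 0$. By Proposition \ref{123084},
\[
0 < E[\omega_{A, Q}(X)] = \sum^m_{i = 1}a_i(q_i - q_{i - 1}) < \infty.
\]
Moreover, Definition \ref{529103} of $X_{A, Q}$ is exactly the weighted version $X_\omega$ with $\omega = \omega_{A, Q}$. So Lemma \ref{112087} applies and gives the three identities with $G_{X, \omega_{A, Q}}$ in place of $G_{A, Q}$.

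Next we identify the distortion. Lemma \ref{1231168} computes $G_{X, \omega_{A, Q}}(t)$ explicitly for continuous $X$. The key point there is $\omega_{A, Q}(F_X^{-1}(u)) = a_k$ for $u \in (q_{k-1}, q_k)$. This holds because $F_X^{-1}$ is non-decreasing and $F_X^{-1}(u) \in [F_X^{-1}(q_{k-1}), F_X^{-1}(q_k)]$. The only ambiguity is at points where $F_X^{-1}(u) = F_X^{-1}(q_k)$ with $u<q_k$, that is, where $F_X^{-1}$ is flat. Such a flat stretch has $u$-measure zero in the integral only when it is not of positive length. Even if it is, the value $F_X^{-1}(q_k)$ is hit by $F_X^{-1}$ only on $u$ in an interval ending at $q_k$. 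Since $F_X^{-1}$ is left-continuous and $F_X(F_X^{-1}(q))=q$, the set $\{u : F_X^{-1}(u) = x\}$ equals $(F_X(x^-),F_X(x)]$, which is a single point when $F_X$ is continuous. Hence equality holds for almost every $u$. With this, the formula \eqref{529202} follows, and $G_{X, \omega_{A, Q}} = G_{A, Q}$ is independent of $X$. Substituting this into the three identities yields the theorem.

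The main obstacle is only this measure-zero boundary issue, together with confirming the endpoint conventions in Definition \ref{52378}: the first interval starts at $F_X^{-1+}(0)$, and the last one is closed at $F_X^{-1}(1)$. These conventions ensure that $\omega_{A,Q}(F_X^{-1}(u))$ is correctly defined for all $u\in(0,1)$. The argument uses continuity of $F_X$ for this, and Lemma \ref{1231168} may be cited directly.
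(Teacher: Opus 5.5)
Your proposal is correct and follows exactly the paper's argument: the theorem is Lemma \ref{112087} specialized to $\omega=\omega_{A,Q}$, with the hypotheses supplied by Proposition \ref{123084}, after which $G_{X,\omega_{A,Q}}$ is replaced by $G_{A,Q}$ using Lemma \ref{1231168}. Your boundary discussion can be dropped, and its set identity is slightly off: since $F_X$ is continuous, $F_X^{-1}$ is strictly increasing on $(0,1)$ (a flat piece would be an atom of $X$), so $\{u : F_X^{-1}(u)=x\}$ is $\{F_X(x)\}$ or empty, and $\omega_{A,Q}(F_X^{-1}(u))=a_k$ holds for every $u\in(q_{k-1},q_k)$, not just almost every such $u$.
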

\begin{proof} 
Theorem \ref{1231219} follows immediately from Lemma \ref{1231168} and Lemma \ref{112087}. 
\end{proof}
As a natural generalization of the inverse distribution function, the authors in \cite{KDG2000} introduced the $\alpha$-inverse distribution function in order to present a closed expression for the stop-loss premiums of comonotonic sums. Since then, this concept has proven to be useful in some actuarial contexts, see \cite{DDR2022} and \cite{MH2011}, amongst others. Based on the above results, we can also express the $\alpha$-inverse distribution function of the step-weighted distribution in terms of the $\alpha$-inverse distribution function of the original distribution as follows. 

\begin{cor} \label{1231229}
Let $A = (a_1, ..., a_m)^{'}$ and $Q = (q_1, ..., q_m)^{'}$ be two numerical vectors with $\min(a_1, ..., a_m) > 0$ and $0 = q_0 < q_1 < q_2 < ... < q_m = 1$. Let $X$ be a continuous random variable and $X_{A, Q}$ be the step-weighted version of $X$ associated with the numerical vectors $A$ and $Q$. Then it holds that 
\begin{equation} \label{529262}
F_{X_{A, Q}}^{-1(\alpha)}(p) = F_X^{-1(\alpha)}(G^{-1}_{A, Q}(p)), \,\, \text{for any} \,\, p \in (0, 1), \alpha \in [0, 1],
\end{equation}
where 
\begin{equation} \label{1231234}
G_{A, Q}^{-1}(p) = a_k^{-1}{a}(p - u_{k - 1}) + q_{k - 1}, \,\, p \in [u_{k - 1}, u_{k}], k = 1, ..., m 
\end{equation}
with $a = \sum^m_{i = 1}a_i(q_{i} - q_{i - 1})$, $u_0 = 0$ and $u_k = a^{-1}\sum^{k}_{i = 1}a_i(q_{i} - q_{i - 1}), k = 1, ..., m$. 
\end{cor}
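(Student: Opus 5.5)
We prove the two assertions in reverse order: first the explicit form (\ref{1231234}) of $G^{-1}_{A,Q}$, then the identity (\ref{529262}) for the $\alpha$-inverse, which will follow from Theorem \ref{1231219}.

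\textbf{Step 1: inverting $G_{A,Q}$.} Since all $a_k>0$ now, the function $G_{A,Q}$ of Lemma \ref{1231168} is continuous and piecewise linear. On $[q_{k-1},q_k]$ it has slope $a_k/a>0$. Hence $G_{A,Q}$ is a strictly increasing bijection of $[0,1]$ onto itself, and it maps $[q_{k-1},q_k]$ onto $[u_{k-1},u_k]$; indeed $G_{A,Q}(q_k)=u_k$ by (\ref{529202}). Solving $p=a^{-1}(a u_{k-1}+a_k(t-q_{k-1}))$ for $t$ gives exactly (\ref{1231234}). Because $G_{A,Q}$ is a continuous strictly increasing bijection, its inverse is unambiguous. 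In particular $G^{-1}_{A,Q}=G^{-1+}_{A,Q}$ on $(0,1)$, and $G^{-1}_{A,Q}$ is itself continuous and strictly increasing.

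\textbf{Step 2: the $\alpha$-inverse.} Recall the definition from \cite{KDG2000}:
\[
F^{-1(\alpha)}_Y(p)=\alpha F^{-1}_Y(p)+(1-\alpha)F^{-1+}_Y(p).
\]
Apply Theorem \ref{1231219} with $Y=X_{A,Q}$:
\[
F^{-1(\alpha)}_{X_{A,Q}}(p)=\alpha F_X^{-1}(G^{-1}_{A,Q}(p))+(1-\alpha)F_X^{-1+}(G^{-1+}_{A,Q}(p)).
\]
Replacing $G^{-1+}_{A,Q}(p)$ by $G^{-1}_{A,Q}(p)$ via Step 1 turns the right-hand side into $F_X^{-1(\alpha)}(G^{-1}_{A,Q}(p))$. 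Note also that $G^{-1}_{A,Q}(p)\in(0,1)$ whenever $p\in(0,1)$, so all the quantile functions involved are evaluated in their proper domain.

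\textbf{Main obstacle.} The only delicate point is the equality $G^{-1}=G^{-1+}$. This is where the strengthened hypothesis $\min a_k>0$ is used. If some $a_k=0$, then $G_{A,Q}$ is flat on $[q_{k-1},q_k]$, and $G^{-1}(u_k)=q_{k-1}$ while $G^{-1+}(u_k)=q_k$, so the two inverses differ. Positivity of every $a_k$ rules this out by making $G_{A,Q}$ strictly increasing.
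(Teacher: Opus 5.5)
Your proposal is correct and follows the paper's proof. The paper likewise uses $\min_k a_k>0$ to conclude that $G_{A,Q}$ is continuous and strictly increasing, hence $G^{-1}_{A,Q}=G^{-1+}_{A,Q}$ on $(0,1)$, and then substitutes Theorem \ref{1231219} into $F^{-1(\alpha)}=\alpha F^{-1}+(1-\alpha)F^{-1+}$. Your explicit inversion of (\ref{529202}) to obtain (\ref{1231234}) fills in the step the paper only calls easy to check.
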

\begin{proof} 
If the numerical vector $A = (a_1, ..., a_m)^{'}$ satisfies $\min(a_1, ..., a_m) > 0$ then from Lemma \ref{1231168} we find that the distortion function $G_{A, Q}(t)$ is continuous and strictly increasing on $(0, 1)$. This implies that $$G^{-1}_{A, Q}(p) = G^{-1+}_{A, Q}(p), \,\, \text{for any} \,\, p \in (0, 1),$$ see \cite{Dhaene et al. (2002a)} for instance. Thus, from Theorem \ref{1231219} it follows that 
\begin{align*}
F_{X_{A, Q}}^{-1(\alpha)}(p) &= \alpha F_{X_{A, Q}}^{-1}(p) + (1 - \alpha)F_{X_{A, Q}}^{-1+}(p) \\ 
&= \alpha F_X^{-1}(G_{A, Q}^{-1}(p)) + (1 - \alpha)F_X^{-1+}(G_{A, Q}^{-1+}(p)) \\ 
&= \alpha F_X^{-1}(G_{A, Q}^{-1}(p)) + (1 - \alpha)F_X^{-1+}(G_{A, Q}^{-1}(p)) \\ 
&= F_X^{-1(\alpha)}(G_{A, Q}^{-1}(p))
\end{align*} 
for any $p \in (0, 1)$ and $\alpha \in [0, 1]$. The expression (\ref{1231234}) for the inverse distribution function $G_{A, Q}^{-1}(p)$ is easy to check. This completes the proof. 
\end{proof}

\section{Risk measures of step-weighted distribution}
This section is devoted to the study of the risk measures VaR, TVaR and stop-loss premium of the step-weighted distribution. 
\subsection{VaR of step-weighted distribution} Recall that the Value-at-Risk (VaR) of a random variable $X$ at level $p \in (0, 1)$ is defined by $\text{VaR}_p[X] = F^{-1}_X(p)$. By Theorem \ref{1231219} we obtain the following result regarding VaR of step-weighted distribution. 
\begin{prop} \label{120367}
Let $A = (a_1, ..., a_m)^{'}$ and $Q = (q_1, ..., q_m)^{'}$ be two numerical vectors with $\min(a_1, ..., a_m) \geq 0, \max(a_1, ..., a_m) > 0$ and $0 = q_0 < q_1 < q_2 < ... < q_m = 1$. Let $X$ be a continuous random variable and $X_{A, Q}$ the step-weighted version of $X$ associated with the numerical vectors $A$ and $Q$. Then for any $p \in (0, 1)$ it holds that $$\text{VaR}_p[X_{A, Q}] = \text{VaR}_{G_{A, Q}^{-1}(p)}[X].$$
\end{prop}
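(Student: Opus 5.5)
This is essentially a restatement of the second item of Theorem \ref{1231219} in the language of risk measures, so the proof unwinds the definition of VaR and then invokes that theorem.

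Fix $p \in (0,1)$.

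First, by the definition of Value-at-Risk,
\[
\text{VaR}_p[X_{A,Q}] = F^{-1}_{X_{A,Q}}(p).
\]
By the second bullet of Theorem \ref{1231219}, which rests on Lemma \ref{1231168} and Lemma \ref{112087}, this equals $F_X^{-1}(G^{-1}_{A,Q}(p))$. Here $G_{A,Q}$ is the continuous distortion function of Lemma \ref{1231168}.

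Second, we want to read $F_X^{-1}(G^{-1}_{A,Q}(p))$ as $\text{VaR}_{G^{-1}_{A,Q}(p)}[X]$. This requires the level $G^{-1}_{A,Q}(p)$ to lie in $(0,1)$, where VaR is defined. This is the only point needing care, since some $a_k$ may vanish and $G_{A,Q}$ is then flat on some intervals.

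For the level check I use the representation $G^{-1}_{A,Q}(p) = \inf\{t \in [0,1] : G_{A,Q}(t) \geq p\}$. Since $G_{A,Q}(0)=0<p$ and $G_{A,Q}$ is continuous, we get $G_{A,Q}(G^{-1}_{A,Q}(p)) = p > 0$, so $G^{-1}_{A,Q}(p) > 0$. Since $G_{A,Q}(1)=1 \geq p$, we also have $G^{-1}_{A,Q}(p) \le 1$.

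The potentially delicate case is $G^{-1}_{A,Q}(p)=1$. It can only occur if $G_{A,Q}(t)<p<1$ for all $t<1$. Because the slope of $G_{A,Q}$ on $[q_{m-1},1]$ is $a_m/a \geq 0$, this happens exactly when $a_m>0$ and $p$ is arbitrarily close to $1$. For fixed $p<1$ with $a_m>0$, however, $G_{A,Q}$ is strictly increasing near $1$ and continuous. Hence $G_{A,Q}(t)\ge p$ for some $t<1$, and therefore $G^{-1}_{A,Q}(p)<1$. If instead $a_m=0$, then $G_{A,Q}$ attains the value $1 > p$ already at $q_{m-1}<1$, so again the infimum is below $1$.

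In all cases $G^{-1}_{A,Q}(p)\in(0,1)$. Therefore
\[
F_X^{-1}(G^{-1}_{A,Q}(p)) = \text{VaR}_{G^{-1}_{A,Q}(p)}[X],
\]
which completes the argument.
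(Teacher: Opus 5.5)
Your argument is correct and is the same as the paper's, which derives the proposition directly from the second item of Theorem \ref{1231219}; your extra check that $G^{-1}_{A,Q}(p)\in(0,1)$ is a detail the paper leaves implicit. The bound $G^{-1}_{A,Q}(p)<1$ already follows from continuity of $G_{A,Q}$ at $1$ and $G_{A,Q}(1)=1>p$, so you can drop the case split on $a_m$ and the imprecise remark about $p$ being ``arbitrarily close to $1$''.
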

\begin{proof} 
Proposition \ref{120367} follows immediately from Theorem \ref{1231219}. 
\end{proof}
\subsection{TVaR of step-weighted distribution} Recall that the Tail-Value-at-Risk (TVaR) of a random variable $X$ at level $p \in (0, 1)$ is defined by $\text{TVaR}_p[X] = \frac{1}{1 - p}\int^1_{p}F^{-1}_X(q)dq$. By Theorem \ref{1231219} and Lemma \ref{1231168} we obtain the following result concerning TVaR of step-weighted distribution. 
\begin{prop} \label{14371}
Let $A = (a_1, ..., a_m)^{'}$ and $Q = (q_1, ..., q_m)^{'}$ be two numerical vectors with $\min(a_1, ..., a_m) \geq 0, \max(a_1, ..., a_m) > 0$ and $0 =q_0 < q_1 < q_2 < ... < q_m = 1$. Let $X$ be a continuous random variable and $X_{A, Q}$ be the step-weighted version of $X$. Then for any $p \in (0, 1)$ it holds that 
\begin{align*} 
\text{TVaR}_p[X_{A, Q}] = 
\frac{a_{l}(1 - p^{*})}{a(1 - p)}\text{TVaR}_{p^{*}}[X] + \sum^{m - 1}_{k = l}\frac{(a_{k + 1} - a_{k})(1 - q_k)}{a(1 - p)}\text{TVaR}_{q_k}[X]
\end{align*}
with $p^{*} = G^{-1}_{A, Q}(p) \in [q_{l - 1}, q_{l}]$ for some $l \in \{1, ..., m\}$ and $\sum_{k = m}^{m - 1} = 0$ by convention. 
\end{prop}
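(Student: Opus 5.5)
We plan to start from the definition of TVaR and substitute the quantile representation of Theorem \ref{1231219}, namely $F^{-1}_{X_{A,Q}}(q) = F^{-1}_X(G^{-1}_{A,Q}(q))$. This gives
$$\text{TVaR}_p[X_{A,Q}] = \frac{1}{1-p}\int_p^1 F^{-1}_X(G^{-1}_{A,Q}(q))\,dq.$$
The natural step is the change of variables $t = G^{-1}_{A,Q}(q)$, equivalently $q = G_{A,Q}(t)$. By Lemma \ref{1231168}, $G_{A,Q}$ is piecewise linear with derivative $a_k/a$ on $(q_{k-1},q_k)$. Hence $dq = a^{-1}\omega(t)\,dt$, where $\omega(t)=a_k$ on $(q_{k-1},q_k)$, and $q$ ranging over $[p,1]$ corresponds to $t$ ranging over $[p^*,1]$, with $p^* = G^{-1}_{A,Q}(p)$.

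We therefore expect
$$\text{TVaR}_p[X_{A,Q}] = \frac{1}{a(1-p)}\Big(a_l\int_{p^*}^{q_l}F_X^{-1}(t)\,dt + \sum_{k=l+1}^m a_k\int_{q_{k-1}}^{q_k}F_X^{-1}(t)\,dt\Big).$$
Next we rewrite each integral as a difference of tail integrals, using $\int_s^1F^{-1}_X(t)\,dt = (1-s)\text{TVaR}_s[X]$ and the convention $(1-q_m)\text{TVaR}_{q_m}=0$:
$$\int_{p^*}^{q_l} = (1-p^*)\text{TVaR}_{p^*}[X] - (1-q_l)\text{TVaR}_{q_l}[X],$$
$$\int_{q_{k-1}}^{q_k} = (1-q_{k-1})\text{TVaR}_{q_{k-1}}[X] - (1-q_k)\text{TVaR}_{q_k}[X].$$
Summation by parts then collects the coefficient of $(1-q_k)\text{TVaR}_{q_k}[X]$ for $k=l,\dots,m-1$ as $a_{k+1}-a_k$. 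This yields exactly the stated formula, and the last term vanishes.

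The main obstacle is justifying the change of variables when some $a_k=0$. In that case $G_{A,Q}$ is constant on $[q_{k-1},q_k]$, so $G^{-1}_{A,Q}$ jumps, and $p^*$ may not be uniquely placed in a single interval. Rather than substituting directly, we would argue via the pushforward. If $U$ is uniform on $(0,1)$, then $G^{-1}_{A,Q}(U)$ has distribution function $G_{A,Q}$, since $G_{A,Q}$ is continuous. Consequently
$$\int_p^1 h(G^{-1}_{A,Q}(q))\,dq = \int \mathbb{I}\{G^{-1}_{A,Q}(q)\ge p^*\}\, h(G^{-1}_{A,Q}(q))\,dq = \int_{p^*}^1 h(t)\,dG_{A,Q}(t),$$
up to a null set, using the equivalence $G^{-1}(q)\le t \iff q\le G(t)$ recalled in the proof of Lemma \ref{112087}. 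Here $dG_{A,Q}(t)=a^{-1}\omega(t)\,dt$, which is valid for nonnegative $a_k$. When $p^*$ is ambiguous because $G_{A,Q}$ is flat near it, the formula is unaffected: the interval $(p^*,q_l)$ then carries weight $a_l=0$, or the two candidate values of $l$ give the same total, and this will be checked directly. Finally, we will note that integrability holds whenever $\text{TVaR}[X]$ is finite.
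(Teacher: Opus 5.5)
Your proposal is correct and follows essentially the same route as the paper. You substitute $F^{-1}_{X_{A,Q}}=F^{-1}_X\circ G^{-1}_{A,Q}$ from Theorem~\ref{1231219}, pass to $\int_{p^{*}}^{1}F^{-1}_X(t)\,dG_{A,Q}(t)$ with $G_{A,Q}$ having density $a_k/a$ on $(q_{k-1},q_k)$, split over these intervals, and regroup into TVaR's; your pushforward argument for zero weights and the explicit summation by parts make precise what the paper treats as immediate (also, $p^{*}\in(0,1)$ always since $G_{A,Q}$ is continuous with $G_{A,Q}(0)=0$ and $G_{A,Q}(1)=1$, so the paper's separate case $p^{*}=0$ never arises).
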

\begin{proof} Let $p \in (0, 1)$ be any fixed probability level and $p^{*} = G^{-1}_{A, Q}(p)$. First, we assume that $p^{*} > 0$ and $p^{*} \in (q_{l - 1}, q_{l}]$ for some $l \in \{1, ..., m\}$. Then by Theorem \ref{1231219} and Lemma \ref{1231168} we have that 
\begin{align*} 
\text{TVaR}_p[X_{A, Q}] &= \frac{1}{1 - p}\int^1_pF^{-1}_{X_{A, Q}}(q)dq \\ 
&= \frac{1}{1 - p}\int^1_{p^{*}}F^{-1}_X(t)dG_{A, Q}(t) \\ 
&= \frac{a_l}{a(1 - p)}\int^{q_l}_{p^{*}}F_X^{-1}(t)dt +  \sum^{m}_{k = l + 1}\frac{a_k}{a(1 - p)}\int^{q_k}_{q_{k - 1}}F^{-1}_X(t)dt \\ 
&= \frac{a_{l}(1 - p^{*})}{a(1 - p)}\text{TVaR}_{p^{*}}[X] + \sum^{m - 1}_{k = l}\frac{(a_{k + 1} - a_{k})(1 - q_k)}{a(1 - p)}\text{TVaR}_{q_k}[X]. 
\end{align*}
One can easily check that it is still valid in case $p^{*} = 0$. This completes the proof. 
\end{proof}

\subsection{Stop-loss premium of step-weighed distribution} 
Recall that the stop-loss premium with retention $d$ of a random variable $X$ is defined by $E[(X - d)_{+}]$, with the notation $(x - d)_{+} = \max(x - d, 0)$. By Theorem \ref{1231219} and Proposition \ref{14371} we obtain the following result concerning stop-loss premium of step-weighted distribution. 
\begin{prop} \label{0108390}
Let $A = (a_1, ..., a_m)^{'}$ and $Q = (q_1, ..., q_m)^{'}$ be two numerical vectors with $\min(a_1, ..., a_m) \geq 0, \max(a_1, ..., a_m) > 0$ and $0 = q_0 < q_1 < q_2 < ... < q_m = 1$. Then, for any continuous random variable $X$ the stop-loss premiums of the step-weighted distribution associated with the numerical vectors $A$ and $Q$ are given by  
\begin{align*}
E[(X_{A, Q} - d)_{+}] &= {a}^{-1}\Big[a_lE[(X - d)_{+}] + d\Big(a_l(1 - q_{l}) - \sum^{m - 1}_{k = l}a_{k + 1}(q_{k + 1} - q_{k})\Big)  \\ 
&\quad + \sum^{m - 1}_{k = l}(a_{k + 1} - a_{k})(1 - q_k)\text{TVaR}_{q_k}[X]\Big], \,\, d \in (F^{-1+}_{X_{A, Q}}(0), F^{-1}_{X_{A, Q}}(1))
\end{align*}
with $F_X(d) \in (q_{l - 1}, q_l]$ for some $l \in \{1, ..., m\}$ and $\sum_{k = m}^{m - 1} = 0$ by convention. 
\end{prop}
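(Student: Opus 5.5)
The plan is to bypass the quantile representation and work directly with the defining density relation of Definition \ref{529103}, splitting the expectation over the $m$ blocks of the step-weight function. By Definition \ref{529103} and Proposition \ref{123084},
\[
E[(X_{A,Q}-d)_+]=a^{-1}\sum_{k=1}^m a_k\,E\big[(X-d)_+\,\mathbb{I}\{F_X^{-1}(q_{k-1})\le X\le F_X^{-1}(q_k)\}\big].
\]
Represent $X\stackrel{d}{=}F_X^{-1}(U)$ with $U$ uniform on $(0,1)$. Because $F_X$ is continuous, the block indicators become $\mathbb{I}\{q_{k-1}\le U\le q_k\}$ up to null sets, and $\{X>d\}$ coincides with $\{U>F_X(d)\}$ up to a null set.

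The key auxiliary function is
\[
h(q)=E\big[(X-d)_+\,\mathbb{I}\{U>q\}\big].
\]
For $q\ge F_X(d)$ the positive part can be dropped, which gives
\[
h(q)=\int_q^1 \big(F_X^{-1}(t)-d\big)\,dt=(1-q)\big(\text{TVaR}_q[X]-d\big).
\]
Fix $l$ with $F_X(d)\in(q_{l-1},q_l]$. Then:
\begin{itemize}
\item blocks $k<l$ lie entirely below $d$ and contribute $0$;
\item block $l$ contributes $h(F_X(d))-h(q_l)$, and $h(F_X(d))=E[(X-d)_+]$;
\item block $k>l$ contributes $h(q_{k-1})-h(q_k)$.
\end{itemize}

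Next I would apply Abel summation to $\sum_{k=l}^m a_k\,(\text{difference of consecutive } h\text{'s})$, using $h(q_m)=h(1)=0$. This gives
\[
a\,E[(X_{A,Q}-d)_+]=a_l\,E[(X-d)_+]+\sum_{k=l}^{m-1}(a_{k+1}-a_k)\,h(q_k).
\]
Substituting $h(q_k)=(1-q_k)\text{TVaR}_{q_k}[X]-d(1-q_k)$ yields the TVaR sum in the statement together with the term $-d\sum_{k=l}^{m-1}(a_{k+1}-a_k)(1-q_k)$.

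The remaining step, which I expect to be the only real bookkeeping obstacle, is to show this $d$-coefficient matches the stated one, i.e.
\[
-\sum_{k=l}^{m-1}(a_{k+1}-a_k)(1-q_k)=a_l(1-q_l)-\sum_{k=l}^{m-1}a_{k+1}(q_{k+1}-q_k).
\]
To verify it, shift the index in $\sum_{k=l}^{m-1}a_{k+1}(1-q_k)$ to $\sum_{k=l+1}^{m}a_k(1-q_{k-1})$. Then subtract $\sum_{k=l}^{m-1}a_k(1-q_k)$, separating out the $k=l$ term and adding the zero term $a_m(1-q_m)$. This gives
\[
\sum_{k=l}^{m-1}(a_{k+1}-a_k)(1-q_k)=\sum_{k=l+1}^m a_k(q_k-q_{k-1})-a_l(1-q_l),
\]
which is exactly the required identity.

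Finally, two technical points need a remark. First, all TVaRs are finite whenever $E[(X-d)_+]<\infty$, which is implicit in the statement. Second, the restriction $d\in(F^{-1+}_{X_{A,Q}}(0),F^{-1}_{X_{A,Q}}(1))$ only ensures $F_X(d)\in(0,1]$, so that a valid index $l$ exists. Equivalently, one could start from $E[(Y-d)_+]=\int_{F_Y(d)}^1(F_Y^{-1}(q)-d)\,dq$ with Theorem \ref{1231219} and Proposition \ref{14371}. However, the direct route above avoids having to identify $G^{-1}_{A,Q}(G_{A,Q}(F_X(d)))$ with $F_X(d)$ when some $a_k=0$.
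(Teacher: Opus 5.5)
Your proof is correct, but it follows a different route from the paper's.

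The paper argues on the side of $X_{A,Q}$. It sets $p=F_{X_{A,Q}}(d)$ and $p^{*}=G^{-1}_{A,Q}(p)$, and applies the identity $E[(Y-d)_+]=(\text{TVaR}_{F_Y(d)}[Y]-d)(1-F_Y(d))$ to $Y=X_{A,Q}$. It then inserts the TVaR formula of Proposition \ref{14371} and replaces $(1-p^{*})\text{TVaR}_{p^{*}}[X]$ by $E[(X-d)_+]+d(1-p^{*})$. That last substitution requires $p^{*}=F_X(d)$, which the paper verifies only for $a_l>0$. When $a_l=0$ one actually has $p^{*}\le q_{l-1}<F_X(d)$, so the substitution is unavailable, and the paper only asserts that this case is easy to check.

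You instead work on the side of $X$. You decompose $E[(X-d)_+\,\omega_{A,Q}(X)]$ block by block through $X=F_X^{-1}(U)$ and carry out the summation by parts yourself. In effect you inline the proof of Proposition \ref{14371} together with the stop-loss identity, without ever passing through $G^{-1}_{A,Q}$.

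What your route gains:
\begin{itemize}
\item The case $a_l=0$ is handled uniformly with the others.
\item The argument does not depend on Theorem \ref{1231219} or Proposition \ref{14371}.
\item It makes explicit the integrability needed (finite TVaRs, i.e.\ $E[X_+]<\infty$), which the paper leaves implicit.
\item It uses the restriction on $d$ only to guarantee that the index $l$ exists.
\end{itemize}
One small precision: the difficulty you mention arises specifically when $a_l=0$, not when an arbitrary $a_k$ vanishes.

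What the paper's route gains: given its earlier results it is shorter. It also fits the paper's general theme of transferring risk measures of $X_{A,Q}$ to those of $X$ through the distortion $G_{A,Q}$.
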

\begin{proof} 
Let $p = F_{X_{A, Q}}(d)$ and $p^{*} = G_{A, Q}^{-1}(p)$. By the definition of weighted distribution $$(F^{-1+}_{X_{A, Q}}(0), F^{-1}_{X_{A, Q}}(1)) \subseteq (F_{X}^{-1+}(0), F_{X}^{-1}(1)).$$ 
Therefore, we can assume that $F_X(d) \in (q_{l - 1}, q_l]$ for some $l \in \{1, ..., m\}$. If $a_l > 0$, then by Theorem \ref{1231219} we find that $$
p^{*} = G_{A, Q}^{-1}(p) = G_{A, Q}^{-1}(F_{X_{A, Q}}(d)) = G_{A, Q}^{-1}(G_{A, Q}(F_X(d))) = F_X(d)  \in (q_{l - 1}, q_l].$$
Hence, by the well-known identity of stop-loss premiums 
\begin{align*} 
E[(X - d)_{+}] &= (\text{TVaR}_{F_{X}(d)}[X] - d)(1 - F_{X}(d)), 
\end{align*}
see \cite{Dhaene et al. (2006)} for instance, and Proposition \ref{14371} we have that 
\begin{align*} 
E[(X_{A, Q} - d)_{+}] 
&= (\text{TVaR}_p[X_{A, Q}] - d)(1 - p) \\ 
&= {a}^{-1}\Big[a_l(1 - p^{*})\text{TVaR}_{p^{*}}[X] + \sum^{m - 1}_{k = l}(a_{k + 1} - a_{k})(1 - q_k)\text{TVaR}_{q_k}[X]\Big] \\ 
&\quad - \Big(1 - {a}^{-1}(\sum^{l - 1}_{k = 1}a_k(q_k - q_{k - 1}) + a_l(p^{*} - q_{l - 1})\Big)d \\ 
&= {a}^{-1}\Big[a_lE[(X - d)_{+}] + d\Big(a_l(1 - q_{l}) - \sum^{m - 1}_{k = l}a_{k + 1}(q_{k + 1} - q_{k})\Big)  \\ 
&\quad + \sum^{m - 1}_{k = l}(a_{k + 1} - a_{k})(1 - q_k)\text{TVaR}_{q_k}[X]\Big], \,\, d \in (F^{-1+}_{X_{A, Q}}(0), F^{-1}_{X_{A, Q}}(1))
\end{align*}
with $\sum_{k = m}^{m - 1} = 0$ by convention. One can easily check that the equation is still valid if $a_l = 0$. This completes the proof. 
\end{proof}

\section{Step-weighted distribution of comonotonic sums}\label{26538} 
In this section, we study the step-weighted distribution of comonotonic sums of continuous random variables. A number of new results will be presented. 
\subsection{A fundamental result}
In this subsection, we investigate the step-weighted distribution of comonotonic sums. Let $(X_1, ..., X_n)^{'}$ be a random vector of dimension $n$ and $\omega$ be a weight function. Then, we call $((X_1)_{\omega}, ..., (X_n)_{\omega})^{'}$ the weighted version of the random vector $(X_1, ..., X_n)^{'}$ associated with the weight function $\omega$. As usual, we denote the comonotonic sum of the components of $(X_1, ..., X_n)^{'}$ by $$S^c = X_1^c + ... + X_n^c$$ and the weighted version of $S^c$ associated with the weight function $\omega$ by $(S^c)_{\omega}$. The sum and the comonotonic sum of the components of the weighted random vector $((X_1)_{\omega}, ..., (X_n)_{\omega})^{'}$ will be denoted by 
$$S_{\omega} = (X_1)_{\omega} + ... + (X_n)_{\omega}$$ and $$(S_{\omega})^c = ((X_1)_{\omega})^c + ... + ((X_n)_{\omega})^c$$ respectively. Then we have 
\begin{thm} \label{0103383}
Let $A = (a_1, ..., a_m)^{'}$ and $Q = (q_1, ..., q_m)^{'}$ be two numerical vectors with $\min(a_1, ..., a_m) \geq 0, \max(a_1, ..., a_m) > 0$ and $0 = q_0 < q_1 < q_2 < ... < q_m = 1$. Then for any continuous random variables $X_1, ..., X_n$ we have that $$(S^c)_{\omega} \stackrel{d}{=} (S_{\omega})^c$$
where $\stackrel{d}{=}$ stands for equality in distribution and $X_{\omega}$ denotes the step-weighted version of a continuous random $X$ associated with the step-weight function $\omega(x) = \omega_{A, Q, X}(x)$. 
\end{thm}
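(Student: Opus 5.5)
The plan is to compare the inverse distribution functions of $(S^c)_{\omega}$ and $(S_{\omega})^c$. Two continuous-type distributions coincide once their quantile functions $F^{-1}$ agree on $(0,1)$, since $F^{-1}$ determines the distribution function. The key observation is that, by Lemma \ref{1231168}, the distortion $G_{A,Q}$ attached to the step-weight function $\omega_{A,Q,X}$ does not depend on $X$. So the same distortion governs every marginal and the comonotonic sum, and Theorem \ref{1231219} lets us pull it through.

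\textbf{Step 1 (the right-hand side).} For each $i$, Theorem \ref{1231219} applied to the continuous variable $X_i$ gives
\[
F^{-1}_{(X_i)_{\omega}}(p)=F^{-1}_{X_i}\bigl(G^{-1}_{A,Q}(p)\bigr),\qquad p\in(0,1).
\]
By the well-known additivity of quantiles for comonotonic sums (see \cite{Dhaene et al. (2002a)}), it follows that
\[
F^{-1}_{(S_{\omega})^c}(p)=\sum_{i=1}^n F^{-1}_{X_i}\bigl(G^{-1}_{A,Q}(p)\bigr).
\]

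\textbf{Step 2 (the left-hand side).} First I check that $S^c$ is itself a continuous random variable. Each $X_i$ is continuous, so each $F^{-1}_{X_i}$ is strictly increasing on $(0,1)$. Hence $F^{-1}_{S^c}=\sum_i F^{-1}_{X_i}$ is strictly increasing, and $S^c=\sum_i F^{-1}_{X_i}(U)$ has no atoms. Therefore Theorem \ref{1231219} applies to $S^c$ with its own step-weight function $\omega_{A,Q,S^c}$. Since the distortion is again $G_{A,Q}$, we get
\[
F^{-1}_{(S^c)_{\omega}}(p)=F^{-1}_{S^c}\bigl(G^{-1}_{A,Q}(p)\bigr)=\sum_{i=1}^n F^{-1}_{X_i}\bigl(G^{-1}_{A,Q}(p)\bigr),
\]
where comonotonic additivity is applied at the level $G^{-1}_{A,Q}(p)$.

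\textbf{Step 3 (conclusion).} We need $G^{-1}_{A,Q}(p)\in(0,1)$, or more carefully that boundary values cause no trouble. If some $a_k=0$, then $G_{A,Q}$ has flat pieces, and $G^{-1}_{A,Q}(p)$ could equal $0$ or $1$ only for $p$ near the endpoints. In that case both sides use the same convention for $F^{-1}(0)$ and $F^{-1}(1)$, so they agree there as well. Once the quantile functions of Steps 1 and 2 agree on $(0,1)$, the distributions are equal, which gives $(S^c)_{\omega}\stackrel{d}{=}(S_{\omega})^c$.

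The main obstacle I expect is Step 2: justifying that $S^c$ is continuous, so that Theorem \ref{1231219} (which assumes continuity) can be invoked. This is also the point that shows the statement must be read with $\omega$ meaning $\omega_{A,Q,Y}$ for each variable $Y$ separately, rather than a single fixed function of $x$. The argument is exactly the strict-monotonicity observation above, which is the only place where the continuity of the $X_i$ is genuinely used.
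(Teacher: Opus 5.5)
Your proof is correct and follows the same route as the paper's. You first establish that $S^c$ is continuous, using strict monotonicity of $\sum_i F^{-1}_{X_i}$ on $(0,1)$. You then apply Theorem \ref{1231219} together with comonotonic additivity of quantiles, which identifies both quantile functions with $\sum_i F^{-1}_{X_i}(G^{-1}_{A,Q}(p))$.

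Your Step 3 worry is vacuous, even when some $a_k=0$. Because $G_{A,Q}$ is continuous with $G_{A,Q}(0)=0$ and $G_{A,Q}(1)=1$, $G^{-1}_{A,Q}(p)$ always lies in $(0,1)$ for every $p\in(0,1)$. So no boundary convention is ever needed.
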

\begin{proof} 
By the well-known additivity of quantiles for comonotonic sums we have that $$F_{S^c}^{-1}(p) = F_{X_1}^{-1}(p) + ... + F_{X_n}^{-1}(p), \,\, \text{for any} \,\, p \in (0, 1)$$ see \cite{DDGK2006} and \cite{Dhaene et al. (2002a)} for instance. Since each $X_i$ is assumed to be continuous, the inverse distribution function $F_{X_i}^{-1}(p)$ is strictly increasing on $(F^{-1+}_{X_i}(0), F^{-1}_{X_i}(1))$, $i = 1, ..., n$. Therefore, $F_{S^c}^{-1}(p)$ is strictly increasing on $(F^{-1+}_{S^c}(0), F^{-1}_{S^c}(1))$, which in turn implies that $S^c$ is a continuous random variable. By Theorem \ref{1231219} and the well-known additivity of quantiles for comonotonic sums we find that 
$$F_{(S^c)_{\omega}}^{-1}(p) = F_{S^c}^{-1}(G_{A, Q}^{-1}(p)) = F_{(X_1)_{\omega}}^{-1}(p) + ... + F_{(X_n)_{\omega}}^{-1}(p) = F_{(S_{\omega})^c}^{-1}(p)$$
for any $p \in (0, 1)$. Thus, it follows that  
$$(S^c)_{\omega} \stackrel{d}{=} F_{(S^c)_{\omega}}^{-1}(U) = F_{(S_{\omega})^c}^{-1}(U) \stackrel{d}{=} (S_{\omega})^c$$
where $U$ is a random variable uniformly distributed on $(0, 1)$. This ends the proof. 
\end{proof}

From Theorem \ref{0103383} we can conclude that the step-weighted version $(S^c)_{\omega}$ of the comonotonic sum $S^c$ of continuous random variables $X_1, ..., X_n$ is equal in distribution to the comonotonic sum $(S_{\omega})^c$ of the step-weighted versions of $X_1, ..., X_n$. In other words, the operation of comonotonic summing interchanges with the operation of step-weighting for continuous random variables. 

Here we present an example to illustrate the main findings of Theorem \ref{0103383}. 
\begin{ex}
Let $A = (\frac{1}{2}, \frac{11}{6}, \frac{5}{6})^{'}$ and $Q = (\frac{1}{4}, \frac{1}{2}, 1)^{'}$. Consider the sum $S = X_1 + X_2$ where $X_1$ and $X_2$ are random variables uniformly distributed over the unit interval $(0, 1)$. Let $S^c = X_1^c + X_2^c$ where $(X_1^c, X_2^c)^{'}$ is a comonotonic modification of $(X_1, X_2)^{'}$. By the well-known additivity of quantiles for comonotonic sums one can easily check that $S^c \sim U(0, 2)$. In this case, the step-weighted distribution of $S^c$ associated with the step-weight function $\omega = \omega_{A, Q}$ is given by $$F_{(S^c)_\omega}(x)\begin{cases} \frac{1}{4}x & x \in [0, \frac{1}{2}) \\ 
\frac{11}{12}(x - \frac{1}{2}) + \frac{1}{8} & x \in [\frac{1}{2}, 1) \\ 
\frac{5}{12}(x - 1) + \frac{7}{12} & x \in [1, 2]
\end{cases}.$$
On the other hand, for the random variables $X_i, i = 1, 2$, we have $$F_{(X_i)_\omega}(x) = \begin{cases} 
\frac{1}{2}x & x \in [0, \frac{1}{4}) \\ 
\frac{11}{6}(x - \frac{1}{4}) + \frac{1}{8} & x \in [\frac{1}{4}, \frac{1}{2}) \\ 
\frac{5}{6}(x - \frac{1}{2}) + \frac{7}{12} & x \in [\frac{1}{2}, 1]
\end{cases}.$$ This combining the well-known additivity of quantiles for comonotonic sums implies that $$F^{-1}_{(S_\omega)^c}(p) = \begin{cases} 
4p & p \in [0, \frac{1}{8}] \\ 
\frac{12}{11}(p - \frac{1}{8}) + \frac{1}{2} & p \in (\frac{1}{8}, \frac{7}{12}] \\ 
\frac{12}{5}(p - \frac{7}{12}) + 1 & p \in (\frac{7}{12}, 1]
\end{cases}.$$ Now it is easy to check that $$F_{(S^c)_\omega}(x) = F_{(S_\omega)^c}(x) \,\, \text{for} \,\, x \in \mathbb{R}.$$
\end{ex}

\subsection{Quantiles of step-weighted distribution of comonotonic sums}In the following theorem, we consider the $\alpha$-inverse distribution function $F^{-1(\alpha)}_{(S^c)_{\omega}}(p)$ of the step-weighted version $(S^c)_{\omega}$ of the comonotonic sum $S^c$. We prove that under mild conditions the $\alpha$-inverse distribution function of the step-weighted comonotonic sum $(S^c)_{\omega}$ at probability level $p$ can be expressed as the sum of the $\alpha$-inverse distribution functions of the marginal distributions involved at probability level $G_{A}^{-1}(p)$. This generalizes Theorem 6 of \cite{Dhaene et al. (2002a)} to the case of step-weighted distributions. 

\begin{thm} \label{27543}
Let $A = (a_1, ..., a_m)^{'}$ and $Q = (q_1, ..., q_m)^{'}$ be two numerical vectors with $\min(a_1, ..., a_m) > 0$ and $0 = q_0 < q_1 < q_2 < ... < q_m = 1$. Then for any continuous random variables $X_1, ..., X_n$ the $\alpha$-inverse distribution function $F^{-1(\alpha)}_{(S^c)_{\omega}}(p)$ of the step-weighted comonotonic sum $(S^c)_{\omega}$ with $\omega = \omega_{A, Q}$ is given by 
$$F^{-1(\alpha)}_{(S^c)_{\omega}}(p) = \sum^n_{i = 1}F_{X_i}^{-1(\alpha)}(G_{A, Q}^{-1}(p)), \,\, \text{for any} \,\, p \in (0, 1), \alpha \in [0, 1].$$
\end{thm}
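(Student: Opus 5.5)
The plan is to reduce the statement to two earlier results: Corollary \ref{1231229} applied to the single continuous random variable $S^c$, and the additivity of $\alpha$-inverse distribution functions for comonotonic sums (Theorem 6 of \cite{Dhaene et al. (2002a)}).

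First, as in the proof of Theorem \ref{0103383}, I note that $S^c$ is itself a continuous random variable. Indeed, $F^{-1}_{S^c}(p)=\sum_i F^{-1}_{X_i}(p)$ is a sum of functions each strictly increasing on $(0,1)$ by continuity of the $X_i$. Hence the step-weight function $\omega_{A,Q,S^c}$ and the step-weighted version $(S^c)_\omega$ are well defined. Since $\min(a_1,\dots,a_m)>0$, Corollary \ref{1231229} applies with $X=S^c$ and gives
$$F^{-1(\alpha)}_{(S^c)_\omega}(p)=F^{-1(\alpha)}_{S^c}\big(G^{-1}_{A,Q}(p)\big),\qquad p\in(0,1),\ \alpha\in[0,1].$$
Here $G_{A,Q}$ is the distribution-free distortion of Lemma \ref{1231168}, so it is the same function as for each $X_i$. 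Moreover $G^{-1}_{A,Q}(p)\in(0,1)$ whenever $p\in(0,1)$, because $G_{A,Q}$ is continuous and strictly increasing with $G_{A,Q}(0)=0$ and $G_{A,Q}(1)=1$.

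Second, I apply the additivity of $\alpha$-inverses for comonotonic sums at the level $p^*=G^{-1}_{A,Q}(p)\in(0,1)$:
$$F^{-1(\alpha)}_{S^c}(p^*)=\sum_{i=1}^n F^{-1(\alpha)}_{X_i}(p^*).$$
If I prefer not to cite that theorem, I will derive it directly from $F^{-1}_{S^c}=\sum_i F^{-1}_{X_i}$ and $F^{-1+}_{S^c}=\sum_i F^{-1+}_{X_i}$. The second identity is the point needing care. I would obtain it by writing $F^{-1+}(p)=\lim_{q\downarrow p}F^{-1}(q)$ for each distribution and using right-continuity of the quantile sums. Taking the convex combination $\alpha F^{-1}+(1-\alpha)F^{-1+}$ then gives the identity.

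Combining the two displays proves the theorem. The only real obstacle is justifying the additivity of the upper inverse $F^{-1+}$ for comonotonic sums. This is standard (Theorem 6 of \cite{Dhaene et al. (2002a)}), so I will cite it.
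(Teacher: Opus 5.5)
Your proof is correct, but it takes a different route from the paper's.

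\textbf{The paper's route.} It first invokes the interchange result, Theorem \ref{0103383}, to replace $(S^c)_\omega$ by $(S_\omega)^c$. It then applies additivity of $\alpha$-inverses at level $p$ to the comonotonic sum of the step-weighted marginals $(X_i)_\omega$. Finally it applies Corollary \ref{1231229} to each $X_i$ separately.

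\textbf{Your route.} You apply Corollary \ref{1231229} once, to the single continuous variable $S^c$. You then use additivity of $\alpha$-inverses for the original marginals at the distorted level $p^*=G^{-1}_{A,Q}(p)$. This never needs Theorem \ref{0103383} or the individually weighted marginals $(X_i)_\omega$ (each weighted by its own $\omega_{A,Q,X_i}$). It relies only on two facts: $S^c$ is continuous, and $G_{A,Q}$ does not depend on the distribution.

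\textbf{Comparison.} Your argument is essentially the $\alpha$-version of the computation inside the proof of Theorem \ref{0103383}, so it is the more economical one. The paper's route instead showcases the interchange theorem as the structural fact behind the formula.

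Your checks are correct and both are needed: that $S^c$ is continuous, because each $F^{-1}_{X_i}$ is strictly increasing on $(0,1)$, and that $p^*\in(0,1)$.

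One small wording point in your fallback argument: $F^{-1}$ is left-continuous, not right-continuous. What you actually use is $F^{-1+}(p)=\lim_{q\downarrow p}F^{-1}(q)$, together with the fact that the right limit of a finite sum is the sum of the right limits. That suffices.
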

\begin{proof} 
From Theorem \ref{0103383}, Corollary \ref{1231229}, and the well-known additivity of quantiles for comonotonic sums it follows that  
\begin{align*}
F_{(S^c)_{\omega}}^{-1(\alpha)}(p) = F_{(S_{\omega})^c}^{-1(\alpha)}(p) = \sum^n_{i = 1}F_{(X_i)_{\omega}}^{-1(\alpha)}(p) = \sum^n_{i = 1}F_{X_i}^{-1(\alpha)}(G_{A, Q}^{-1}(p))
\end{align*}
for any $p \in (0, 1), \alpha \in [0, 1]$. This ends the proof. 
\end{proof}
\subsection{TVaR of step-weighted distribution of comonotonic sums} In the next theorem, we consider TVaR of the step-weighted comonotonic sum $(S^c)_{\omega}$. We prove that the TVaR of $(S^c)_{\omega}$ can be expressed as a linear combination of the TVaR's of the involved marginal terms $\text{TVaR}_{p^{*}}[X_i]$ and $\text{TVaR}_{q_k}[X_i], i = 1, ..., n, k = 1, ..., m$. 
\begin{thm} \label{65351}
Let $A = (a_1, ..., a_m)^{'}$ and $Q = (q_1, ..., q_m)^{'}$ be two numerical vectors with $\min(a_1, ..., a_m) \geq 0, \max(a_1, ..., a_m) > 0$ and $0 = q_0 < q_1 < q_2 < ... < q_m = 1$. Then for any continuous random variables $X_1, ..., X_n$ and for any $p \in (0, 1)$ the TVaR of the step-weighted comonotonic sum $(S^c)_{\omega}$ with $\omega = \omega_{A, Q}$ is given by 
\begin{align*} 
\text{TVaR}_p[(S^c)_{\omega}] = 
\frac{a_{l}(1 - p^{*})}{a(1 - p)}\sum^n_{i = 1}\text{TVaR}_{p^{*}}[X_i] + \sum^{n}_{i = 1}\sum^{m - 1}_{k = l}\frac{(a_{k + 1} - a_{k})(1 - q_k)}{a(1 - p)}\text{TVaR}_{q_k}[X_i]
\end{align*}
with $p^{*} = G^{-1}_{A, Q}(p) \in [q_{l - 1}, q_{l}]$ for some $l \in \{1, ..., m\}$ and $\sum_{k = m}^{m - 1} = 0$ by convention. 
\end{thm}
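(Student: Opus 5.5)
The plan is to reduce the statement to Proposition \ref{14371}, applied to the single continuous random variable $S^c$, and then split the resulting TVaR's of $S^c$ into marginal terms by comonotonic additivity.

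\emph{Continuity of $S^c$.} As in the proof of Theorem \ref{0103383}, each $F^{-1}_{X_i}$ is strictly increasing on the relevant open interval, so $F^{-1}_{S^c}=\sum_i F^{-1}_{X_i}$ is strictly increasing there. Hence $S^c$ is a continuous random variable. This matters because the step-weight function $\omega_{A,Q,S^c}$ and Proposition \ref{14371} are only defined for continuous random variables.

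\emph{Applying Proposition \ref{14371}.} Take $X=S^c$ in Proposition \ref{14371}. The distortion $G_{A,Q}$ does not depend on the distribution of the underlying variable (Lemma \ref{1231168}), so $p^{*}=G^{-1}_{A,Q}(p)$ and the index $l$ are exactly those in the statement. This gives
\[
\text{TVaR}_p[(S^c)_\omega]=\frac{a_l(1-p^{*})}{a(1-p)}\text{TVaR}_{p^{*}}[S^c]+\sum_{k=l}^{m-1}\frac{(a_{k+1}-a_k)(1-q_k)}{a(1-p)}\text{TVaR}_{q_k}[S^c].
\]

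\emph{Additivity of TVaR.} Integrate the quantile additivity $F^{-1}_{S^c}(q)=\sum_i F^{-1}_{X_i}(q)$ over $q\in(\beta,1)$ and divide by $1-\beta$. This yields $\text{TVaR}_\beta[S^c]=\sum_i\text{TVaR}_\beta[X_i]$ for every $\beta\in(0,1)$. Substituting this into the display and interchanging the finite sums gives the claimed formula.

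\emph{Boundary cases.} The additivity step needs each marginal TVaR to be finite, which I will assume implicitly, as the statement does. The only real care point is the degenerate level $\beta=0$: this arises when $p^{*}=0$, or formally when $q_k=0$, although the latter cannot happen for $k\ge l\ge1$. When $p^{*}=0$ the term $\text{TVaR}_{0}$ reads as the mean, and additivity of expectation handles it exactly as Proposition \ref{14371} handles that case.

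An alternative route gives a cross-check. Compute $\text{TVaR}_p[(S_\omega)^c]$ via Theorem \ref{0103383} as $\sum_i \text{TVaR}_p[(X_i)_\omega]$, then apply Proposition \ref{14371} to each $X_i$. Since $p^{*}$ and $l$ are common to all $i$, the result is the same formula.
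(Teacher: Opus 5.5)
Your argument is correct, but it runs in the opposite order from the paper's.

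The paper's order:
\begin{enumerate}
\item It first invokes Theorem~\ref{0103383} to rewrite $(S^c)_{\omega}$ as the comonotonic sum $(S_{\omega})^c$ of the weighted marginals $(X_i)_{\omega}$.
\item It uses TVaR additivity at the \emph{weighted} level to get $\text{TVaR}_p[(S^c)_{\omega}]=\sum_{i}\text{TVaR}_p[(X_i)_{\omega}]$.
\item Only then does it apply Proposition~\ref{14371} to each $X_i$.
\end{enumerate}
This is exactly your ``cross-check.''

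Your main route instead applies Proposition~\ref{14371} once, to the continuous variable $S^c$. It then uses the classical additivity $\text{TVaR}_{\beta}[S^c]=\sum_i\text{TVaR}_{\beta}[X_i]$ for the \emph{unweighted} comonotonic sum, at the levels $\beta=p^{*}$ and $\beta=q_k$.

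What you gain is a shorter dependency chain. You need from Theorem~\ref{0103383} only the continuity of $S^c$, which is proved inside it, not the interchange identity itself. Your argument also follows the same template the paper uses for the stop-loss result, Theorem~\ref{0108458}.

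What the paper's order gains is the intermediate identity $\text{TVaR}_p[(S^c)_{\omega}]=\sum_i\text{TVaR}_p[(X_i)_{\omega}]$. It presents the result as a consequence of $(S^c)_{\omega}$ itself being a comonotonic sum, which is the structural point behind Theorem~\ref{66420}.

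Two minor remarks. First, your boundary case $p^{*}=0$ never occurs. Since $G_{A,Q}$ is continuous with $G_{A,Q}(0)=0$ and $G_{A,Q}(1)=1$, we get $p^{*}=G^{-1}_{A,Q}(p)\in(0,1)$ for every $p\in(0,1)$, so every TVaR level you use lies in $(0,1)$. Second, the finiteness caveat you flag applies equally to the paper's proof. It is needed to avoid $\infty-\infty$ when $a_{k+1}-a_k$ changes sign.
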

\begin{proof} 
By Theorem \ref{0103383}, Proposition \ref{14371}, and the well-known additivity of TVaR's for comonotonic sums we have that 
\begin{align*}
TVaR_p[(S^c)_{\omega}] &= TVaR_p[(S_{\omega})^c] \\ 
&= \sum^n_{i = 1} TVaR_p[(X_i)_{\omega}] \\ 
&= \frac{a_{l}(1 - p^{*})}{a(1 - p)}\sum^n_{i = 1}\text{TVaR}_{p^{*}}[X_i] + \sum^{n}_{i = 1}\sum^{m - 1}_{k = l}\frac{(a_{k + 1} - a_{k})(1 - q_k)}{a(1 - p)}\text{TVaR}_{q_k}[X_i]
\end{align*} 
with $p^{*} = G^{-1}_{A, Q}(p) \in [q_{l - 1}, q_{l}]$ for some $l \in \{1, ..., m\}$. This completes the proof. 
\end{proof}

\subsection{Stop-loss premium of step-weighted distribution of comonotonic sums}
In the following theorem, we consider stop-loss premium of the step-weighted comonotonic sum $(S^c)_{\omega}$. We prove that the stop-loss premium of $(S^c)_{\omega}$ can be obtained from the stop-loss premiums and TVaR's of the marginal distributions involved. Our result generalizes Theorem 7 of \cite{Dhaene et al. (2002a)} to the case of step-weighted distributions. 
\begin{thm} \label{0108458}
Let $A = (a_1, ..., a_m)^{'}$ and $Q = (q_1, ..., q_m)^{'}$ be two numerical vectors with $\min(a_1, ..., a_m) \geq 0, \max(a_1, ..., a_m) > 0$ and $0 = q_0 < q_1 < q_2 < ... < q_m = 1$. Let $X_1, ..., X_n$ be a series of continuous random variables, $S^c$ the comonotonic sum of $X_1, ..., X_n$ and $(S^c)_{\omega}$ the step-weighted version of $S^c$ associated with the step-weight function $\omega = \omega_{A, Q}$. Then the stop-loss premiums of $(S^c)_{\omega}$ are given by 
\begin{align*}
E[((S^c)_{\omega} - d)_{+}] &= {a}^{-1}\Big[a_l\sum^n_{i = 1}E[(X_i - d_i)_{+}] + d\Big(a_l(1 - q_{l}) - \sum^{m - 1}_{k = l}a_{k + 1}(q_{k + 1} - q_{k})\Big)  \\ 
&\quad + \sum^{m - 1}_{k = l}(a_{k + 1} - a_{k})(1 - q_k)\sum^n_{i = 1}\text{TVaR}_{q_k}[X_i]\Big], \,\, d \in (F^{-1+}_{(S^c)_{\omega}}(0), F^{-1}_{(S^c)_{\omega}}(1))
\end{align*}
with $F_{S^c}(d) \in (q_{l - 1}, q_l]$ for some $l \in \{1, ..., m\}$, $\sum_{k = m}^{m - 1} = 0$ by convention, and the $d_i$ given by $$d_i = F_{X_i}^{-1(\alpha_d)}(F_{S^c}(d)), \,\, i = 1, ..., n$$ where $\alpha_d \in [0, 1]$ is determined by $$F_{S^c}^{-1(\alpha_d)}(F_{S^c}(d)) = d.$$
\end{thm}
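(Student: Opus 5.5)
Since Theorem \ref{0103383} shows that $S^c$ is itself a continuous random variable, Proposition \ref{0108390} applies directly with $X$ replaced by $S^c$. For $d \in (F^{-1+}_{(S^c)_{\omega}}(0), F^{-1}_{(S^c)_{\omega}}(1))$ and $F_{S^c}(d) \in (q_{l-1}, q_l]$ it gives
\begin{align*}
E[((S^c)_{\omega} - d)_{+}] &= a^{-1}\Big[a_lE[(S^c - d)_{+}] + d\Big(a_l(1 - q_l) - \sum^{m-1}_{k=l}a_{k+1}(q_{k+1} - q_k)\Big) \\
&\quad + \sum^{m-1}_{k=l}(a_{k+1} - a_k)(1 - q_k)\text{TVaR}_{q_k}[S^c]\Big].
\end{align*}
Comparing with the claimed formula, two substitutions finish the proof: replace $\text{TVaR}_{q_k}[S^c]$ by $\sum_i \text{TVaR}_{q_k}[X_i]$, and replace $E[(S^c-d)_+]$ by $\sum_i E[(X_i-d_i)_+]$.

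The first substitution is the additivity of TVaR for comonotonic sums. It follows from $F^{-1}_{S^c}(q) = \sum_i F^{-1}_{X_i}(q)$ (already used in the proof of Theorem \ref{0103383}) by integrating over $(q_k,1)$. For $k=l,\dots,m-1$ we have $q_k<1$, so every TVaR that appears is well defined.

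The second substitution is the decomposition of stop-loss premiums of comonotonic sums, Theorem 7 of \cite{Dhaene et al. (2002a)}. It states that for $d$ in the interior of the support of $S^c$ one has $E[(S^c-d)_+] = \sum_i E[(X_i - d_i)_+]$ with $d_i = F^{-1(\alpha_d)}_{X_i}(F_{S^c}(d))$, where $\alpha_d$ is chosen so that $F^{-1(\alpha_d)}_{S^c}(F_{S^c}(d)) = d$. I would quote it, recalling the one-line reason: on $\{U > F_{S^c}(d)\}$ every $X_i^c = F^{-1}_{X_i}(U) \ge d_i$, and on the complement every $X_i^c \le d_i$. Since $\sum_i d_i = d$ by additivity of $\alpha$-quantiles, we get $(S^c - d)_+ = \sum_i (X_i^c - d_i)_+$ pointwise.

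The only point needing care, which I expect to be the main obstacle, is that $d$ lies in the range where Theorem 7 applies and where $\alpha_d$ exists. Proposition \ref{0108390}'s proof already notes $(F^{-1+}_{(S^c)_\omega}(0), F^{-1}_{(S^c)_\omega}(1)) \subseteq (F^{-1+}_{S^c}(0), F^{-1}_{S^c}(1))$. Hence $F_{S^c}(d)\in(0,1)$, and since $S^c$ is continuous, $d$ lies between $F^{-1}_{S^c}(F_{S^c}(d))$ and $F^{-1+}_{S^c}(F_{S^c}(d))$, so such an $\alpha_d$ exists. The endpoint case $F_{S^c}(d) = q_l$ and the case $a_l = 0$ are already covered by Proposition \ref{0108390}, so no extra case analysis is needed.
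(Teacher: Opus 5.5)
Your proposal is correct and follows the paper's proof essentially step for step. The paper likewise applies Proposition~\ref{0108390} to $S^c$, then substitutes $E[(S^c-d)_+]=\sum_i E[(X_i-d_i)_+]$ from Theorem 7 of Dhaene et al.\ (2002a) and $\text{TVaR}_{q_k}[S^c]=\sum_i \text{TVaR}_{q_k}[X_i]$ from comonotonic additivity. Your added justifications (continuity of $S^c$, existence of $\alpha_d$, and the pointwise identity behind Theorem 7) are sound and make explicit what the paper leaves implicit; the inequality $F^{-1}_{S^c}(F_{S^c}(d))\le d\le F^{-1+}_{S^c}(F_{S^c}(d))$ actually holds for every distribution, so continuity is not needed at that step.
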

\begin{proof} 
By Proposition \ref{0108390} we have that 
\begin{align} \label{18469}
E[((S^c)_{\omega} - d)_{+}] &= {a}^{-1}\Big[a_lE[(S^c - d)_{+}] + d\Big(a_l(1 - q_{l}) - \sum^{m - 1}_{k = l}a_{k + 1}(q_{k + 1} - q_{k})\Big)  \\ 
&\quad + \sum^{m - 1}_{k = l}(a_{k + 1} - a_{k})(1 - q_k)\text{TVaR}_{q_k}[S^c]\Big], \,\, d \in (F^{-1+}_{(S^c)_{\omega}}(0), F^{-1}_{(S^c)_{\omega}}(1)) \nonumber 
\end{align}
with $F_{S^c}(d) \in (q_{l - 1}, q_l]$ for some $l \in \{1, ..., m\}$ and $\sum_{k = m}^{m - 1} = 0$ by convention. By the definition of weighted distribution $$(F^{-1+}_{(S^c)_{\omega}}(0), F^{-1}_{(S^c)_{\omega}}(1)) \subseteq (F_{S^c}^{-1+}(0), F_{S^c}^{-1}(1)).$$ So, from Theorem 7 of \cite{Dhaene et al. (2002a)} we find that the stop-loss premium $E[(S^c - d)_{+}]$ of the comonotonic sum $S^c$ at retention $d$ can be expressed as  
\begin{equation} \label{18473}
E[(S^c - d)_{+}] = \sum^n_{i = 1}E[(X_i - d_i)_{+}]
\end{equation}
with the $d_i$ given by $$d_i = F_{X_i}^{-1(\alpha_d)}(F_{S^c}(d)), \,\, i = 1, ..., n$$ and $\alpha_d \in [0, 1]$ determined by $$F^{-1(\alpha_d)}_{S^c}(F_{S^c}(d)) = d.$$ Moreover, from the well-known additivity of distortion risk measures for comonotonic sums it follows that 
\begin{equation} \label{0108477}
\text{TVaR}_{q_k}[S^c] = \sum^n_{i = 1}\text{TVaR}_{q_k}[X_i] 
\end{equation}
see \cite{DDGK2006} and \cite{Dhaene et al. (2006)} for the details. Substituting (\ref{18473}) and (\ref{0108477}) into (\ref{18469}) leads to 

\begin{align*}
E[((S^c)_{\omega} - d)_{+}] &= {a}^{-1}\Big[a_l\sum^n_{i = 1}E[(X_i - d_i)_{+}] + d\Big(a_l(1 - q_{l}) - \sum^{m - 1}_{k = l}a_{k + 1}(q_{k + 1} - q_{k})\Big)  \\ 
&\quad + \sum^{m - 1}_{k = l}(a_{k + 1} - a_{k})(1 - q_k)\sum^n_{i = 1}\text{TVaR}_{q_k}[X_i]\Big], \,\, d \in (F^{-1+}_{(S^c)_{\omega}}(0), F^{-1}_{(S^c)_{\omega}}(1)). 
\end{align*} 
This completes the proof. 
\end{proof}
\section{Comonotonic and moment matching approximations} \label{626556}
In this section, based on the concept of weighted distribution, we introduce a new kind of approximations for sums of continuous random variables, such that they are both comonotonic and moment matching. The main result of this section is the following theorem. 

\begin{thm} \label{66420}
Consider the sum $S = \sum^n_{i = 1}X_i$ of continuous random variables $X_1, ..., X_n$ and let $T$ be a continuous comonotonic approximation of $S$, which means that there exist continuous random variables $Y_1, ..., Y_n$ such that $T = Y^c_1 + ... + Y^c_n.$ Assume that the origin moments $E[S^r]$ and $E[T^r]$ exist for a positive integer $r$ and denote $c = (1, E[S], ..., E[S^r])^{'}$. Let $Q = (q_1, ..., q_m)^{'}$ be any fixed numerical vector with $0 = q_0 < q_1 < q_2 < ... < q_m = 1$ and define the $(r + 1)$-dimensional column vectors $b_1, ..., b_m$ by 
\begin{equation} \label{67419}
b_j = (1, b_{1j}, ..., b_{rj})^{'}, \,\, j = 1, ..., m
\end{equation}
where 
\begin{equation} \label{67423}
b_{ij} = E[T^i \,|\, F_{T}^{-1}(q_{j - 1}) \leq T \leq F_{T}^{-1}(q_j)] < \infty, \,\, i = 1, ..., r, j = 1, ..., m.
\end{equation}
Denote the convex cone generated by the vectors $b_1, ..., b_m$ by $$\text{Cone}(b_1, ..., b_m) = \Big\{\sum^m_{j = 1}\lambda_jb_j \,|\, \lambda_j \geq 0, j = 1, ..., m\Big\}.$$ If $c \in \text{Cone}(b_1, ..., b_m)$ then one can find a comonotonic approximation $T_{\omega}$ of $S$ such that 
\begin{equation} \label{67427}
E[(T_\omega)^i] = E[S^i], \,\, i = 1, ..., r. 
\end{equation}
In other words, $T_\omega$ defines an approximation of $S$ such that the approximation is comonotonic and moment matching up to order $r$. 
\end{thm}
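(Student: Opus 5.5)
The plan is to take $T_\omega$ to be the step-weighted version of $T$ for a step-weight function $\omega=\omega_{A,Q,T}$, with the weights $A$ read off from the cone membership of $c$. Since $c\in\text{Cone}(b_1,\dots,b_m)$, there exist $\lambda_1,\dots,\lambda_m\ge 0$ with $\sum_j\lambda_jb_j=c$. The first coordinate of every $b_j$ and of $c$ equals $1$, so $\sum_j\lambda_j=1$. In particular $\max_j\lambda_j>0$.

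Define
$$a_j=\frac{\lambda_j}{q_j-q_{j-1}}, \quad j=1,\dots,m.$$
Then $\min a_j\ge0$, $\max a_j>0$, and $a=\sum_i a_i(q_i-q_{i-1})=\sum_i\lambda_i=1$. Consequently the weights in (\ref{529108}) are exactly $w_j=\lambda_j$.

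Since $T$ is continuous, Proposition \ref{213201} applies to $T_{A,Q}$. For each $i=1,\dots,r$ it gives
$$E[(T_{A,Q})^i]=\sum_j w_j\,E[T^i\mid F_T^{-1}(q_{j-1})\le T\le F_T^{-1}(q_j)]=\sum_j\lambda_jb_{ij}.$$
By the $(i+1)$-th coordinate of $\sum_j\lambda_jb_j=c$, this equals $E[S^i]$, which is (\ref{67427}). The finiteness assumption (\ref{67423}) guarantees these conditional moments exist, and $E[T^r]<\infty$ is consistent with it. For $i<r$ the conditional moments are finite because they are bounded on compact pieces or controlled by the tail pieces of $E[|T|^r]$; I would note this in passing.

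It remains to show $T_\omega$ is comonotonic, i.e.\ that $T_\omega$ is (in distribution) a comonotonic sum. Here I invoke Theorem \ref{0103383}. We have $T\stackrel{d}{=}Y_1^c+\dots+Y_n^c=S_Y^c$ with continuous $Y_i$, so $T_\omega=(S_Y^c)_\omega\stackrel{d}{=}((Y_1)_\omega)^c+\dots+((Y_n)_\omega)^c$. Here each $(Y_i)_\omega$ is the step-weighted version of $Y_i$ with the same $A,Q$.

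Hence $T_\omega$ is a comonotonic sum. Explicitly, $T_\omega\stackrel{d}{=}\sum_iF_{Y_i}^{-1}(G_{A,Q}^{-1}(U))$ by Theorem \ref{1231219}, and each summand is a nondecreasing function of $U$, which is the form required by Lemma \ref{626115}.

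The main subtlety, rather than an obstacle, is the step-weight function's dependence on the variable: $\omega_{A,Q,T}$ versus $\omega_{A,Q,Y_i}$. The intervals are defined via the quantiles of each variable separately. Theorem \ref{0103383} is stated exactly in this sense, so the identification goes through because $G_{A,Q}$ does not depend on the distribution (Lemma \ref{1231168}).

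A second minor point is the edge case where some $\lambda_j=0$, so that $G_{A,Q}$ is not strictly increasing. Theorems \ref{1231219} and \ref{0103383} only require $\min a_j\ge0$, so no extra care is needed.
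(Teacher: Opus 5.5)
Your proposal is correct and follows essentially the same route as the paper: it reads off $a_j=\lambda_j/(q_j-q_{j-1})$ from the cone representation, obtains comonotonicity of $T_\omega$ from Theorem \ref{0103383}, and obtains the moment identities from Proposition \ref{213201}. Your observation that the first coordinate forces $\sum_j\lambda_j=1$, so that $w_j=\lambda_j$, just makes explicit the substitution step that the paper leaves implicit.
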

\begin{proof} 
By the definition of convex cone there exist nonnegative numbers $\lambda_1, ..., \lambda_m$ such that 
\begin{equation} \label{67433}
c = \lambda_1b_1 + ... + \lambda_mb_m.
\end{equation}
Let $A = (a_1, ..., a_m)^{'}$ with 
\begin{equation} \label{67437}
a_i = \lambda_i(q_i - q_{i - 1})^{-1}, \,\, i = 1, ..., m.
\end{equation}
Then one can easily check that the numerical vector $A$ satisfies $$\min(a_1, ..., a_m) \geq 0 \,\, \text{and} \,\, \max(a_1, ..., a_m) > 0.$$ 
Given that the random variables $Y_1, ..., Y_n$ are continuous we can conclude that as the comonotonic sum of $Y_1, ..., Y_n$ the comonotonic approximation $T$ must be continuous. For the numerical vectors $A$ and $Q$, we consider the step-weighted version $T_{\omega}$ of $T$ associated with the step-weight function $\omega = \omega_{A, Q}$. By Theorem \ref{0103383} we find that 
$$T_\omega = (Y^c_1 + ... + Y^c_n)_\omega \stackrel{d}{=} ((Y_1)_{\omega})^c + ... + ((Y_n)_{\omega})^c.$$ Therefore, the step-weighted version $T_\omega$ of $T$ gives a comonotonic approximation of $S$. It remains to show the equalities in (\ref{67427}). Indeed, by Proposition \ref{213201} we have that 
\begin{equation} \label{67443}
E[(T_{\omega})^i] = \sum^m_{j = 1}w_jE[T^i \,|\, F^{-1}_T(q_{j - 1}) \leq T \leq F^{-1}_T(q_j)] = \sum^m_{j = 1}w_jb_{ij}, \,\, i = 1, ..., r
\end{equation}
with 
\begin{equation} \label{67444}
w_j = \frac{a_j(q_j - q_{j - 1})}{\sum^m_{k = 1}a_k(q_{k} - q_{k - 1})}, \,\, j = 1, ..., m.
\end{equation}
Substituting (\ref{67437}) into (\ref{67444}) and taking into account equation (\ref{67433}) and the definition of the numerical vector $c$ we eventually obtain that 
\begin{equation*}
E[(T_\omega)^i] = E[S^i], \,\, i = 1, ..., r. 
\end{equation*}
This completes the proof. 
\end{proof}
As a starting approximation of the random sum $S$, the comonotonic approximation $T$ in Theorem \ref{66420}, obviously, can be chosen in an infinite number of ways. To obtain from the starting approximation $T$ a comonotonic and moment matching approximation $T_\omega$ of the random sum $S$ with excellent performance of approximation, the starting approximation $T$ should be selected in advance such that it is as close to the random sum $S$ as possible since in many cases $T_\omega$ is a slight modification of $T$, see the following numerical illustrations.  

To simplify Theorem \ref{66420}and to facilitate numerical illustrations we fix the notations in Theorem \ref{66420} such that $r = 2$ and $m = 3$ and make the following result, which immediately follows from the proof of Theorem \ref{66420}. 
\begin{cor} \label{613595}
Consider the sum $S = \sum^n_{i = 1}X_i$ of continuous random variables $X_1, ..., X_n$ and let $T$ be a continuous comonotonic approximation of $S$, which means that there exist continuous random variables $Y_1, ..., Y_n$ such that $T = Y^c_1 + ... + Y^c_n.$ Assume that $Var[S] < \infty$ and $Var[T] < \infty$ and denote $c = (1, E[S], E[S^2])^{'}$. Let $Q = (q_1, q_2, q_3)^{'}$ be any fixed numerical vector with $0 = q_0 < q_1 < q_2 < q_3 = 1$ and define the $3 \times 3$ matrix $B$ by $B = (b_1, b_2, b_3)$ where $b_1, b_2, b_3$ are $3$-dimensional column vectors defined by 
\begin{equation} \label{613419}
b_j = (1, b_{1j}, b_{2j})^{'}, \,\, j = 1, 2, 3
\end{equation}
with  
\begin{equation} \label{613423}
b_{ij} = E[T^i \,|\, F_{T}^{-1}(q_{j - 1}) \leq T \leq F_{T}^{-1}(q_j)] < \infty, \,\, i = 1, 2, j = 1, 2, 3.
\end{equation}
Denote the convex cone generated by the vectors $b_1, b_2, b_3$ by $$\text{Cone}(b_1, b_2, b_3) = \{\lambda_1b_1 + \lambda_2b_2 + \lambda_3b_3 \,|\, \lambda_j \geq 0, j = 1, 2, 3\}.$$ If $\det(B) \neq 0$ and $c \in \text{Cone}(b_1, b_2, b_3)$ then the step-weighted version $T_\omega $ of the starting comonotonic approximation $T$ associated with the step-weight function $\omega = \omega_{A, Q}$ with 
$$A = \text{diag}((q_1 - q_0)^{-1}, (q_2 - q_1)^{-1}, (q_3 - q_2)^{-1})B^{-1}c$$ 
defines a comonotonic approximation of $S$ such that 
\begin{equation*}
E[T_\omega] = E[S] \,\, \text{and} \,\, Var[T_\omega] = Var[S]. 
\end{equation*} 
\end{cor}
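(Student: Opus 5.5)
The corollary is the case $r = 2$, $m = 3$ of Theorem \ref{66420}, with the cone coefficients computed explicitly. I would therefore retrace the proof of Theorem \ref{66420} and only add two things: identify the coefficients $\lambda_j$, and translate the matching of second moments into matching of variances.

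First I fix the coefficients. Since $\det(B) \neq 0$, the system $B\lambda = c$ has the unique solution $\lambda = B^{-1}c$. The hypothesis $c \in \text{Cone}(b_1, b_2, b_3)$ says that some representation of $c$ with nonnegative coefficients exists. By uniqueness it must be this one, so $\lambda = (\lambda_1, \lambda_2, \lambda_3)^{'} = B^{-1}c$ has $\lambda_j \geq 0$.

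Next I check that $A$ is admissible for Definition \ref{52378}. Setting $a_j = \lambda_j (q_j - q_{j-1})^{-1}$ is exactly $A = \text{diag}((q_1 - q_0)^{-1}, (q_2 - q_1)^{-1}, (q_3 - q_2)^{-1})B^{-1}c$, which is (\ref{67437}). Nonnegativity of the $a_j$ follows from $\lambda_j \geq 0$. For positivity of $\max(a_j)$ I use the first row of $B\lambda = c$. That row gives $\lambda_1 + \lambda_2 + \lambda_3 = 1$, so $\sum_j a_j(q_j - q_{j-1}) = 1$ and not all $a_j$ vanish. This is the one point where some care is needed, and the first coordinate of $c$ being $1$ settles it.

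Then I form the step-weighted version $T_\omega$ and import the two facts used in Theorem \ref{66420}. By Theorem \ref{0103383}, $T_\omega$ is again a comonotonic sum, $T_\omega \stackrel{d}{=} ((Y_1)_\omega)^c + ((Y_2)_\omega)^c + ((Y_3)_\omega)^c$ with the $Y_i$ as in the statement. By Proposition \ref{213201}, $E[(T_\omega)^i] = \sum_j w_j b_{ij}$ for $i = 1, 2$. Since the denominator in (\ref{529108}) equals $1$, the weights reduce to $w_j = a_j(q_j - q_{j-1}) = \lambda_j$. Rows two and three of $B\lambda = c$ then give $E[T_\omega] = E[S]$ and $E[(T_\omega)^2] = E[S^2]$.

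Finally, $\text{Var}[T_\omega] = E[(T_\omega)^2] - E[T_\omega]^2 = E[S^2] - E[S]^2 = \text{Var}[S]$. The finiteness assumptions on $\text{Var}[S]$ and $\text{Var}[T]$ guarantee that all the $b_{ij}$ and moments involved exist.
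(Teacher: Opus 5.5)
Your proof is correct and follows the paper's route: it is the specialization $r=2$, $m=3$ of the proof of Theorem~\ref{66420} that the paper invokes, with the useful extra details that $\det(B)\neq 0$ forces the nonnegative cone coefficients to be $\lambda=B^{-1}c$, and that the first row of $B\lambda=c$ gives $\sum_j a_j(q_j-q_{j-1})=1$, hence $\max_j a_j>0$ and $w_j=\lambda_j$. One small slip: the decomposition from Theorem~\ref{0103383} should read $T_\omega \stackrel{d}{=} ((Y_1)_\omega)^c+\cdots+((Y_n)_\omega)^c$ with $n$ summands, not three, because the $3$ here is the number of steps $m$ and is unrelated to $n$.
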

\begin{proof}
Corollary \ref{613595} immediately follows from the proof of Theorem \ref{66420}. 
\end{proof}

From Theorem \ref{66420} and Corollary \ref{613595} we see that in order to obtain comonotonic and moment matching approximations of the random sum $S$ one needs to judge whether the numerical vector $c$ belongs to the convex cone generated by the vectors $b_1, ..., b_m$. Fortunately, numerical results show that this is indeed the case for a wide range of parameter values, see the following numerical illustrations. Here we present a simple example to illustrate the main findings of Theorem \ref{66420} and Corollary \ref{613595}. 
\begin{ex} 
Let $S = X_1 + X_2$ where $X_1$ and $X_2$ are random variables uniformly distributed on the unit interval $(0, 1)$ with their correlation coefficient given by $r_{X_1, X_2} = \frac{1}{2}$. In this case we have $$E[S] = 1 \,\, \text{and} \,\, Var[S] = \frac{1}{4}.$$
Let $S^c = X_1^c + X_2^c$ denote the comonotonic sum of $X_1$ and $X_2$ , where $(X_1^c, X_2^c)^{'}$ is a comonotonic modification of $(X_1, X_2)^{'}$. Then $S^c$ gives a comonotonic approximation to the random sum $S$. Easy computation shows that $$E[S^c] = 1 \,\, \text{and} \,\, Var[S^c] = \frac{1}{3}.$$
Next, by Theorem \ref{66420}, we want modify the comonotonic sum $S^c$ to give a comonotonic approximation $(S^c)_{\omega}$ of $S$ such that $$E[(S^c)_{\omega}] = E[S] \,\, \text{and} \,\, Var[(S^c)_{\omega}] = Var[S].$$ To this end, let $Q = (q_1, q_2, q_3)^{'}$ be a numerical vector with $q_1 = \frac{1}{4}, q_2 = \frac{1}{2}$ and $q_3 =1$. Using the notations in Theorem \ref{66420} we find that $$b_1 = (1, \frac{1}{4}, \frac{1}{12})^{'}, b_2 = (1, \frac{3}{4}, \frac{7}{12})^{'} \,\, \text{and} \,\, b_3 = (1, \frac{3}{2}, \frac{7}{3})^{'}$$
so that $$c = \lambda_1b_1 + \lambda_2b_2 + \lambda_3b_3$$ with $\lambda_1 = \frac{3}{24}, \lambda_2 = \frac{11}{24} \,\, \text{and} \,\, \lambda_3 = \frac{10}{24}$. Define the numerical vector $A = (a_1, a_2, a_3)^{'}$ with $$a_i = \lambda_i(q_i - q_{i - 1})^{-1}, \,\, i = 1, 2, 3.$$
Easy computation shows that $A = (\frac{1}{2}, \frac{11}{6}, \frac{5}{6})^{'}$. Let $\omega = \omega_{A, Q}$ be the step-weight function associated with the two numerical vectors $A$ and $Q$. Then by Theorem \ref{66420} we have that $(S^c)_{\omega}$ defines a comonotonic approximation of $S$ such that $$E[(S^c)_{\omega}] = E[S] \,\, \text{and} \,\, Var[(S^c)_{\omega}] = Var[S].$$ In other words, $(S^c)_\omega$ defines an approximation of $S$ such that the approximation is comonotonic and moment matching up to order 2. 
\end{ex}

\section{Numerical illustrations} \label{627604}
In this section, we return to the sum of lognormal random variables as described in Section \ref{62773} and illustrate the effectiveness of Theorem \ref{66420} by using a classical numerical example. This example has been considered extensively in the literature. 
\subsection{Model description} \label{613626} 
In this subsection we describe the model and fix the notations which will be used in the following subsections to illustrate the effectiveness of Theorem \ref{66420}. We will consider the random sum $S$ which is defined by 
\begin{equation} \label{613602}
S = \sum^n_{i = 1}\alpha_ie^{Z_i}
\end{equation}
where $\alpha_1, ..., \alpha_n$ are positive real numbers and $(Z_1, ..., Z_n)^{'}$ a random vector that follows a multivariate normal distribution with 
\begin{equation*} 
\text{E}[Z_i] = -i(\mu - \frac{1}{2}\sigma^2), \,\, i = 1, 2, ..., n, 
\end{equation*} 
and 
\begin{equation*} 
\text{Cov}[Z_i, Z_j] = \min(i, j)\sigma^2, \,\, i, j = 1, 2, ..., n, \sigma >0.
\end{equation*}
This model corresponds to a stochastic return $Y_j$ in year $j$, $j = 1, 2, ..., n$, i.e., an amount of 1 at time $j - 1$ will grow to $e^{Y_j}$ at time $j$, such that $Y_1, Y_2, ..., Y_n$ are i.i.d and $N(\mu, \sigma^2)$ distributed, see \cite{Dhaene et al. (2002b)} and \cite{DVG2005} for the details. For simplicity we only examine the case where $n = 20, \mu = 0.075, \sigma = 0.15$ and $\alpha_i = 1, i = 1, 2, ..., n$. This particular choice of parameters is considered in detail in \cite{VCD2008} and \cite{VHD2005b}. Furthermore, we will fix the notations in Theorem \ref{66420} such that 
\begin{equation} \label{613654}
r = 2, m = 3 \,\, \text{and} \,\, Q = (q_1, q_2, q_3)^{'} = ({1}/{3}, {2}/{3}, 1)^{'}.
\end{equation}
One could, of course, try other choice of parameter values but the selection (\ref{613654}) of parameter values will serve our purposes well, see the following numerical illustrations. 
\subsection{Comonotonic and moment matching approximations based on $S^c$} In this subsection, the starting comonotonic approximation of the random sum $S$ in (\ref{613602}) is the comonotonic sum $S^c$ which is defined by  
\begin{equation*} 
S^c = \sum^n_{i = 1}\alpha_ie^{E[Z_i] + \sigma_{Z_i}\Phi^{-1}(U)}
\end{equation*}
where $U$ is a random variable uniformly distributed on the unit interval $(0, 1)$. 

Next we show how to obtain comonotonic and moment matching approximations of $S$ from the starting comontonic approximation $S^c$. First, we note that the moments $E[S^r]$ and $E[(S^c)^r]$ exist for any positive integer $r$ and we denote $c = (1, E[S], ..., E[S^r])^{'}$. For the model and parameters as described in Subsection \ref{613626}, one can numerically check that $\det(B) = 364.5089$ and $c = \lambda_1b_1 + \lambda_2b_2 + \lambda_3b_3$ with $$\lambda_1 = 0.1761505, \lambda_2 = 0.5758623, \lambda_3 = 0.2479872.$$ Therefore, from Corollary \ref{613595} we can conclude that the step-weighted version $(S^c)_\omega $ of $S^c$ associated with the step-weight function $\omega = \omega_{A, Q}$ with 
$$A = (0.5284514, 1.7275870, 0.7439616)^{'} \,\, \text{and} \,\, Q = ({1}/{3}, {2}/{3}, 1)^{'}$$ defines a comonotonic approximation of $S$ such that 
\begin{equation*}
E[(S^c)_\omega] = E[S] \,\, \text{and} \,\, Var[(S^c)_\omega] = Var[S]. 
\end{equation*} 
Furthermore, the quantiles and conditional tail expectations of $(S^c)_\omega$ can be calculated according to Theorem \ref{27543} and Theorem \ref{65351}, respectively. 

Table 1 and Table 2 contain the deviations of the two comonotonic approximations $S^c$ and $(S^c)_\omega$, when approximating quantiles and CTE's of the random sum $S$. They are computed according to the formulae $$\frac{\text{Q}_{p}[S_{approx}] - \text{Q}_{p}[S_{MC}]}{\text{Q}_{p}[S_{MC}]} \times 100\%$$ and $$\frac{\text{CTE}_{p}[S_{approx}] - \text{CTE}_{p}[S_{MC}]}{\text{CTE}_{p}[S_{MC}]} \times 100\%$$ respectively, where $S_{approx}$ represents one of the two approximations and $S_{MC}$ denotes the Monte Carlo simulation result which is based on generating 500,000 random paths. The estimates obtained from this time-consuming simulation will serve as benchmark. The random paths are based on antithetic variables in order to reduce the variance of the Monte-Carlo estimate. The figures displayed in bold in the tables correspond to the best approximations, which means the ones with the smallest absolute deviation, relative to the Monte-Carlo result. For comparison, the mean absolute deviation (MAD) is presented in the last column. 

From Table 1 we see that the MAD corresponding to the probability levels $p = 0.25, 0.50, 0.75, 0.90, 0.95$, which can be regarded as a global performance measure, decreases from $4.78\%$ to $3.91\%$. In addition, the MAD corresponding to the probability levels $p = 0.95, 0.975, 0.99, 0.995, 0.999$, which can be regarded as a local performance measure for high quantiles of the distribution of $S$, decreases from $12.14\%$ to $6.34\%$. Therefore, we can conclude that after being step-weighted, the approximating performance of the comonotonic sum $S^c$ has been essentially improved. Similar observations for CTE can be made from Table 2. 

\begin{figure} [!htbp] 
\centering 
\includegraphics[width = 0.75 \textwidth]{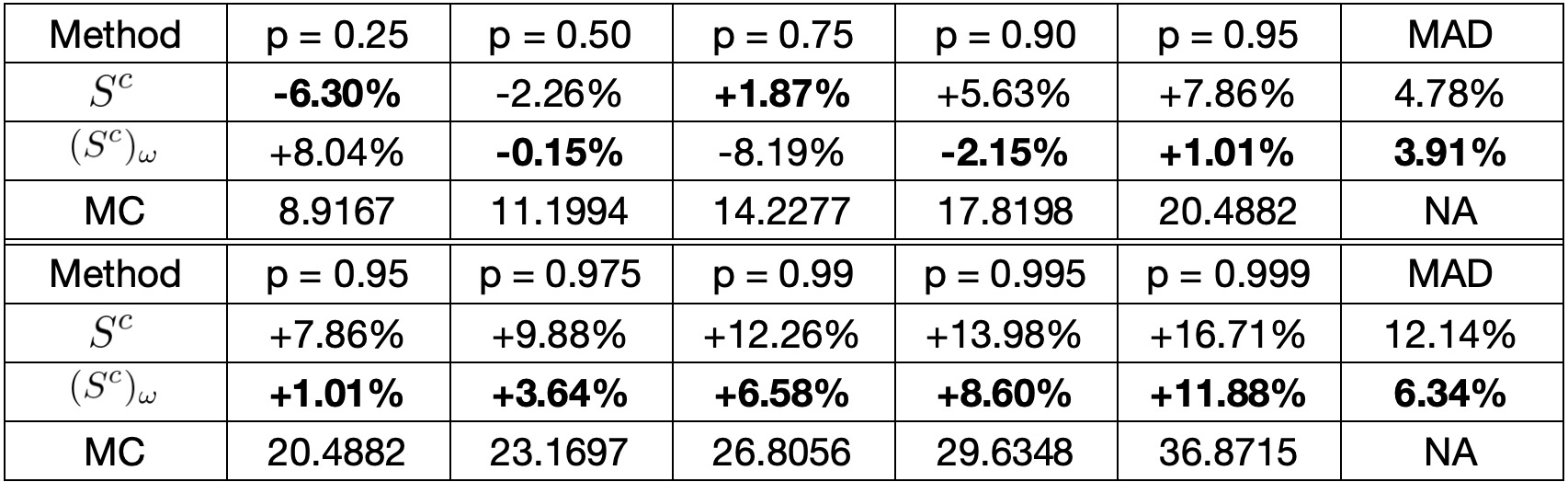} 
\caption*{Table 1: Approximations for selected quantiles of $S$ based on $S^c$ and $(S^c)_\omega$ with $\mu = 0.075, \sigma = 0.15$, $n = 20$, and yearly payments of 1.} 
\label{fig: 619672}
\end{figure}

\begin{figure} [!htbp] 
\centering 
\includegraphics[width = 0.75 \textwidth]{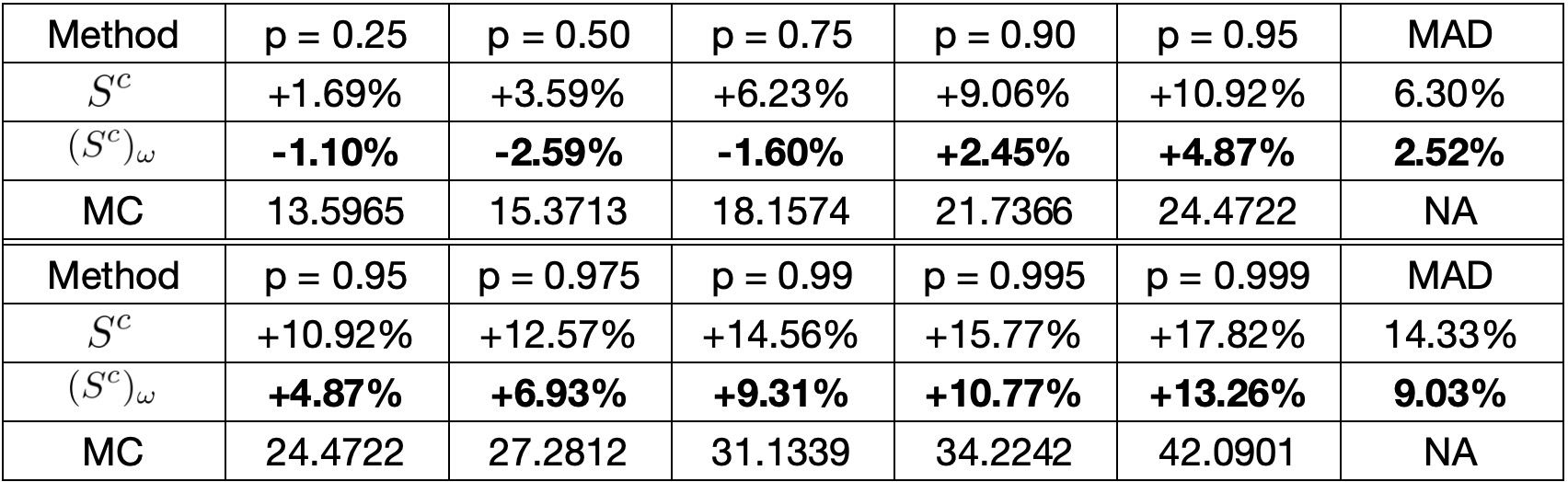} 
\caption*{Table 2: Approximations for selected CTE's of $S$ based on $S^c$ and $(S^c)_\omega$ with $\mu = 0.075, \sigma = 0.15$, $n = 20$, and yearly payments of 1.} 
\label{}
\end{figure}

\subsection{Comonotonic and moment matching approximations based on $S^l_{TB}$} In this subsection, the starting comonotonic approximation of the random sum $S$ in (\ref{613602}) is the comonotonic sum $S^l_{TB}$ which is given by  
\begin{equation*}
S^l_{TB} = \sum^n_{i = 1}\alpha_i \exp\Big(\text{E}[Z_i] + \frac{1}{2}(1 - r_i^2)\sigma_{Z_i}^2 + r_i\sigma_{Z_i}\Phi^{-1}(V)\Big)
\end{equation*}
where $V = \Phi(\frac{\Lambda - \text{E}[\Lambda]}{\sigma_{\Lambda}})$ is a random variable uniformly distributed on $(0, 1)$ and $r_i$ is the correlation coefficient between the two random variables $Z_i$ and $\Lambda = \sum^n_{j = 1}\lambda_j^{TB}Z_j$ with $\lambda_j^{TB} = \alpha_je^{\text{E}[Z_j]}, \,\, j = 1, ..., n.$ For the details see Example \ref{621193}. 

Based on the starting comonotonic approximation $S^l_{TB}$, we can obtain comonotonic and moment matching approximations of the random sum $S$. Indeed, under the assumptions in Subsection \ref{613626}, one can numerically check that $ \det(B) = 225.2262$ and $c = \lambda_1b_1 + \lambda_2b_2 + \lambda_3b_3$
with $$\lambda_1 = 0.3376989, \lambda_2 = 0.3264429, \lambda_3 = 0.3358582.$$ Therefore, from Corollary \ref{613595} we can conclude that the step-weighted version $(S^l_{TB})_\omega $ of $S^l_{TB}$ associated with the step-weight function $\omega = \omega_{A, Q}$ with 
$$A = (1.0130967, 0.9793287, 1.0075746)^{'} \,\, \text{and} \,\, Q = ({1}/{3}, {2}/{3}, 1)^{'}$$ defines a comonotonic approximation of $S$ such that 
\begin{equation*}
E[(S^l_{TB})_\omega] = E[S] \,\, \text{and} \,\, Var[(S^l_{TB})_\omega] = Var[S]. 
\end{equation*} 
The quantiles and conditional tail expectations of $(S^l_{TB})_\omega$ can be calculated according to Theorem \ref{27543} and Theorem \ref{65351}, respectively. 

Table 3 and Table 4 contain the deviations of the two comonotonic approximations $S^l_{TB}$ and $(S^l_{TB})_\omega$, together with the Monte Carlo simulation result. 
From Table 3 we can see that the MAD corresponding to the probability levels $p = 0.25, 0.50, 0.75, 0.90, 0.95$ increases from $0.06\%$ to $0.14\%$. In contrast, the MAD corresponding to the probability levels $p = 0.95, 0.975, 0.99, 0.995, 0.999$ decreases from $0.42\%$ to $0.30\%$. Therefore, we can conclude that the original comonotonic approximation $S^l_{TB}$ performs better when approximating low quantiles of $S$ while the step-weighted comonotonic approximation $(S^l_{TB})_\omega$ gives better approximations for high quantiles of $S$. Moreover, from Table 4 it can be seen that the MAD corresponding to the probability levels $p = 0.25, 0.50, 0.75, 0.90, 0.95$ decreases from $0.15\%$ to $0.09\%$ and the MAD corresponding to the probability levels $p = 0.95, 0.975, 0.99, 0.995, 0.999$ decreases from $0.89\%$ to $0.79\%$. Therefore, we can conclude that $(S^l_{TB})_\omega$ performs better when approximating CTE's of $S$. 
In other words, after being step-weighted, the CTE approximating performance of the comonotonic sum $S^l_{TB}$ has been improved. Another advantage of $(S^l_{TB})_\omega$ is that its variance is equal to the variance of the random sum $S$, facilitating interpretation when we use $(S^l_{TB})_\omega$ to approximate the unknown distribution of $S$, see Theorem \ref{66420}. 

\begin{figure} [!htbp] 
\centering 
\includegraphics[width = 0.75 \textwidth]{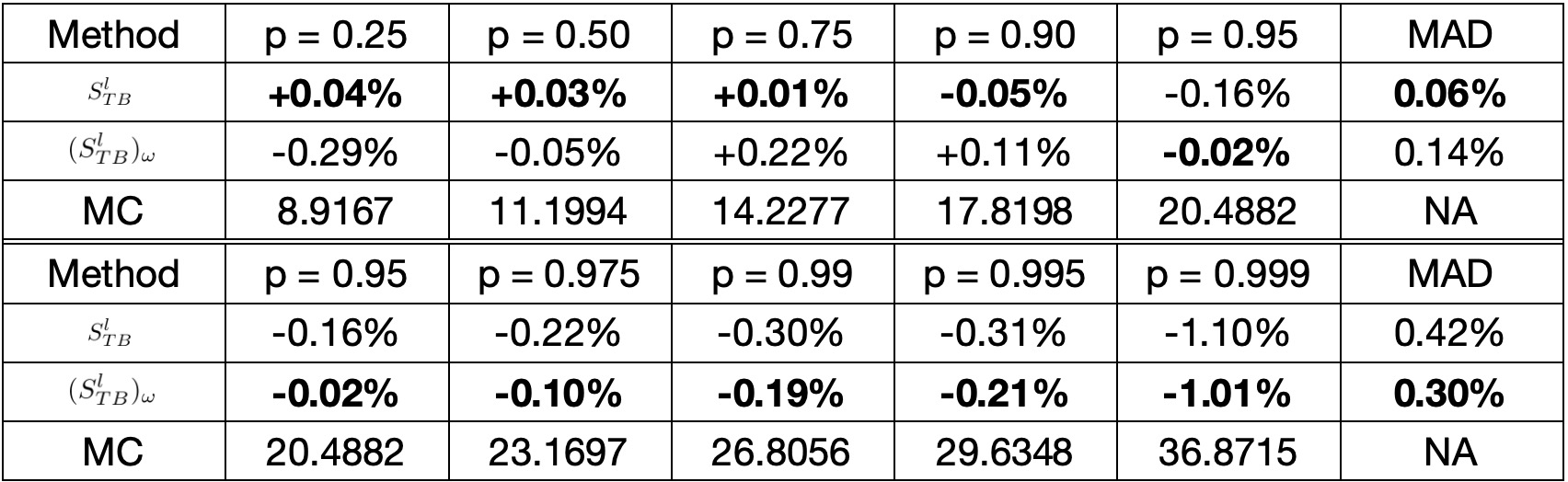} 
\caption*{Table 3: Approximations for selected quantiles of $S$ based on $S^l_{TB}$ and $(S^l_{TB})_\omega$ with $\mu = 0.075, \sigma = 0.15$, $n = 20$, and yearly payments of 1.} 
\label{fig: 619672}
\end{figure}
\begin{figure} [!htbp] 
\centering 
\includegraphics[width = 0.75 \textwidth]{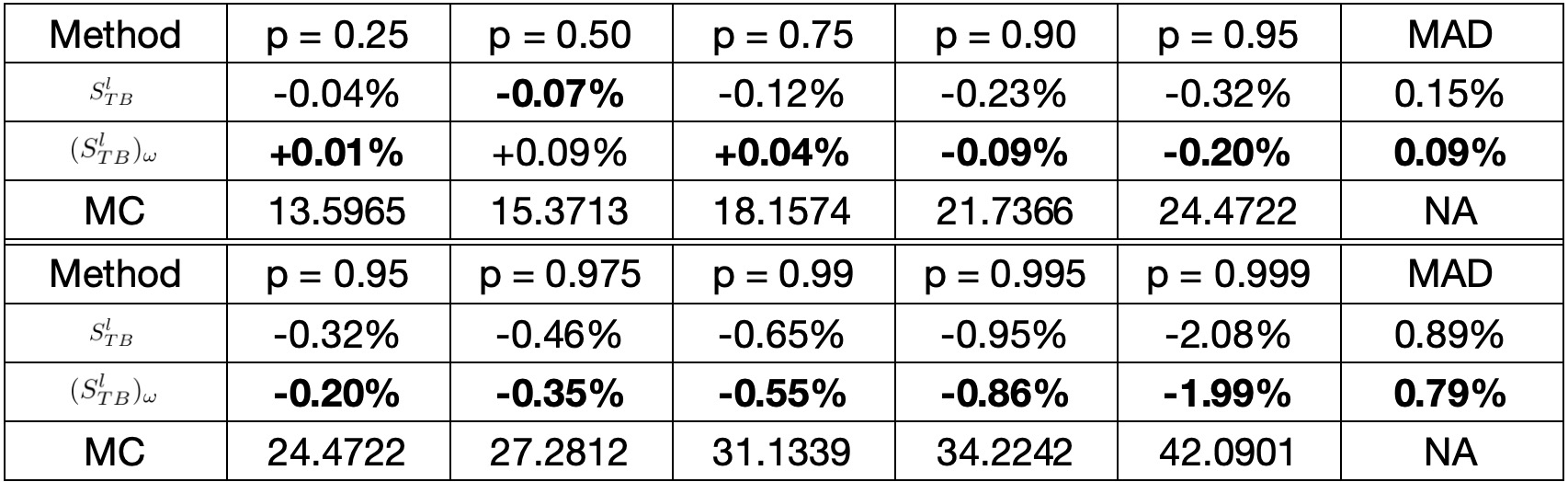} 
\caption*{Table 4: Approximations for selected CTE's of $S$ based on $S^l_{TB}$ and $(S^l_{TB})_\omega$ with $\mu = 0.075, \sigma = 0.15$, $n = 20$, and yearly payments of 1.} 
\label{}
\end{figure}

\subsection{Comonotonic and moment matching approximations based on $S^l_{MV}$} In this subsection, the starting comonotonic approximation of the random sum $S$ in (\ref{613602}) is the comonotonic sum $S^l_{MV}$ which is given by  
\begin{equation*}
S^l_{MV} = \sum^n_{i = 1}\alpha_i \exp\Big(\text{E}[Z_i] + \frac{1}{2}(1 - r_i^2)\sigma_{Z_i}^2 + r_i\sigma_{Z_i}\Phi^{-1}(V)\Big)
\end{equation*}
where $V = \Phi(\frac{\Lambda - \text{E}[\Lambda]}{\sigma_{\Lambda}})$ is a random variable uniformly distributed on $(0, 1)$ and $r_i$ is the correlation coefficient between the two random variables $Z_i$ and $\Lambda = \sum^n_{j = 1}\lambda_j^{MV}Z_j$ with $\lambda^{MV}_j = \alpha_j\text{E}[e^{Z_j}], \,\, j = 1, ..., n.$ For the details see Example \ref{621204}. 

Based on the starting comonotonic approximation $S^l_{MV}$, we can also obtain comonotonic and moment matching approximations of the random sum $S$. Indeed, under the assumptions in Subsection \ref{613626}, one can numerically check that $ \det(B) = 225.2762$ and $c = \lambda_1b_1 + \lambda_2b_2 + \lambda_3b_3$
with $$\lambda_1 = 0.3367033, \lambda_2 = 0.3280212, \lambda_3 = 0.3352755.$$ Therefore, from Corollary \ref{613595} we can conclude that the step-weighted version $(S^l_{MV})_\omega $ of $S^l_{MV}$ associated with the step-weight function $\omega = \omega_{A, Q}$ with $$A = (1.0101099, 0.9840636, 1.0058264)^{'} \,\, \text{and} \,\, Q = ({1}/{3}, {2}/{3}, 1)^{'}$$ defines a comonotonic approximation of $S$ such that 
\begin{equation*}
E[(S^l_{MV})_\omega] = E[S] \,\, \text{and} \,\, Var[(S^l_{MV})_\omega] = Var[S]. 
\end{equation*} 
The quantiles and conditional tail expectations of $(S^l_{MV})_\omega$ can be calculated according to Theorem \ref{27543} and Theorem \ref{65351}, respectively. 

Table 5 and Table 6 contain the deviations of the two comonotonic approximations $S^l_{MV}$ and $(S^l_{MV})_\omega$, 
together with the Monte Carlo simulation result. The observations made from Table 5 and Table 6 are similar to those from Table 3 and Table 4. From Table 5 we can see that the MAD corresponding to the probability levels $p = 0.25, 0.50, 0.75, 0.90, 0.95$ increases from $0.05\%$ to $0.11\%$. In contrast, the MAD corresponding to the probability levels $p = 0.95, 0.975, 0.99, 0.995, 0.999$ decreases from $0.25\%$ to $0.16\%$. Therefore, we can conclude that the original comonotonic approximation $S^l_{MV}$ performs better when approximating low quantiles of $S$ while the step-weighted comonotonic approximation $(S^l_{MV})_\omega$ gives better approximations for high quantiles of $S$. Moreover, from Table 6 it can be seen that the MAD corresponding to the probability levels $p = 0.25, 0.50, 0.75, 0.90, 0.95$ decreases from $0.11\%$ to $0.05\%$ and the MAD corresponding to the probability levels $p = 0.95, 0.975, 0.99, 0.995, 0.999$ decreases from $0.64\%$ to $0.56\%$. Therefore, we can conclude that $(S^l_{MV})_\omega$ performs better when approximating CTE's of $S$. 

\begin{figure} [!htbp] 
\centering 
\includegraphics[width = 0.75 \textwidth]{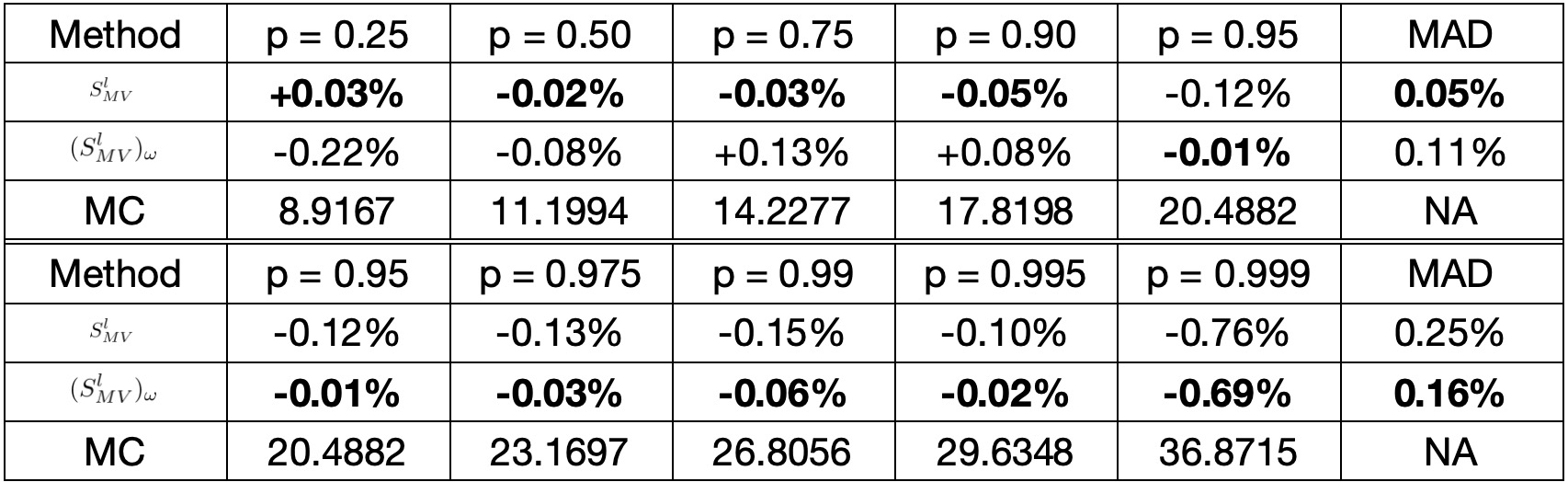} 
\caption*{Table 5: Approximations for selected quantiles of $S$ based on $S^l_{MV}$ and $(S^l_{MV})_\omega$ with $\mu = 0.075, \sigma = 0.15$, $n = 20$, and yearly payments of 1.} 
\label{fig: 619672}
\end{figure}

\begin{figure} [!htbp] 
\centering 
\includegraphics[width = 0.75 \textwidth]{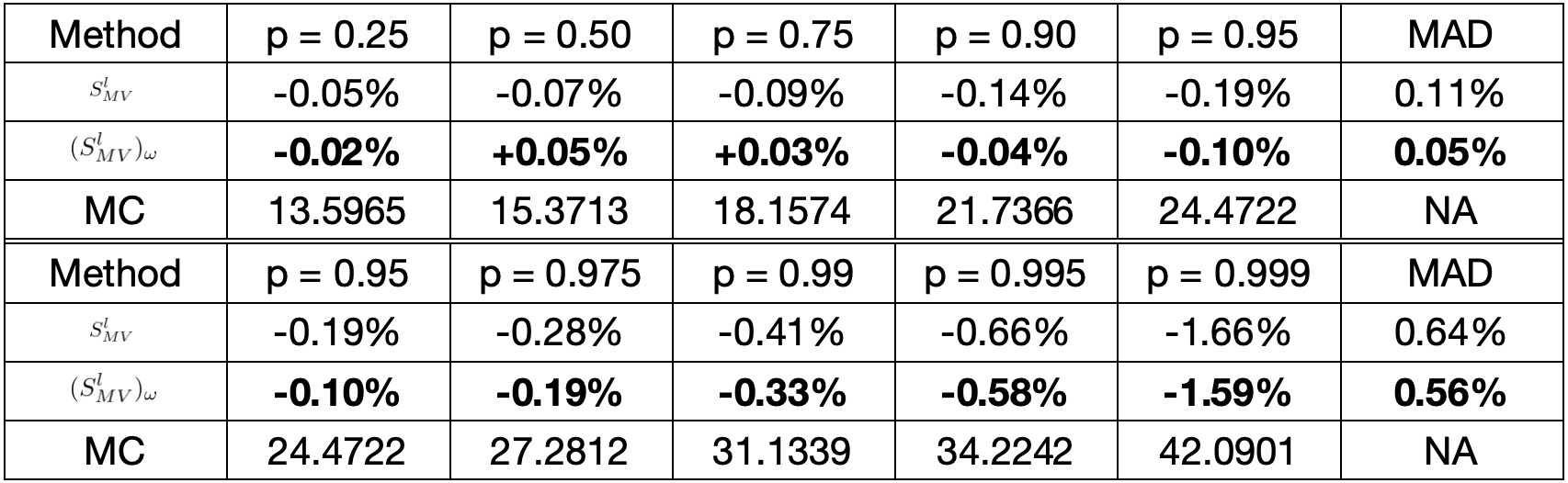} 
\caption*{Table 6: Approximations for selected CTE's of $S$ based on $S^l_{MV}$ and $(S^l_{MV})_\omega$ with $\mu = 0.075, \sigma = 0.15$, $n = 20$, and yearly payments of 1.} 
\label{}
\end{figure}

\section{Concluding remarks}
In this paper, based on the concept of weighted distribution, comonotonic and moment matching approximations for sums of lognormal random variables have been provided. The comonotonicity implies that any distortion risk measure of the newly presented approximations can easily be expressed in terms of the corresponding risk measures of the marginal terms involved while the requirement of moment matching not only facilitates interpretation but makes the new approximations close to the original sum $S$ as well. Both aspects are very important from practical point of view when considering approximations for the distribution and risk measures of the original sum $S$. Numerical results show that the approximation performance of the newly presented approximations is, overall, comparable to the classical comonotonic approximations, but in terms of the right tail of the distribution of $S$, a particularly important concept in actuarial science, our approximations perform better than the classical comonotonic ones. Another contribution of this article is the establishment of the step-weighting theory for continuous random variables. 


\begin{thebibliography}{99}
\bibitem{AGK2019} 
Arratia R, Goldstein L, Kochman F. Size bias for one and all[J]. 2019.

\bibitem{DDV2011}
Deelstra G, Dhaene J, Vanmaele M. An overview of comonotonicity and its applications in finance and insurance[J]. Advanced mathematical methods for finance, 2011: 155-179. 

\bibitem{DD2007}
Denuit M, Dhaene J. Comonotonic bounds on the survival probabilities in the Lee–Carter model for mortality projection[J]. Journal of Computational and Applied Mathematics, 2007, 203(1): 169-176.

\bibitem{DDGK2006}
Denuit M, Dhaene J, Goovaerts M, et al. Actuarial theory for dependent risks: measures, orders and models[M]. John Wiley \& Sons, 2006.

\bibitem{DDR2022}
Denuit M, Dhaene J, Robert C Y. Risk‐sharing rules and their properties, with applications to peer‐to‐peer insurance[J]. Journal of Risk and Insurance, 2022, 89(3): 615-667.

\bibitem{Denuit2019}
Denuit M. Size-biased transform and conditional mean risk sharing, with application to P2P insurance and tontines[J]. ASTIN Bulletin: The Journal of the IAA, 2019, 49(3): 591-617.

\bibitem{Denuit2020}
Denuit M. Size-biased risk measures of compound sums[J]. North American Actuarial Journal, 2020, 24(4): 512-532.

\bibitem{Dhaene et al. (2002a)}
Dhaene J, Denuit M, Goovaerts M J, et al. The concept of comonotonicity in actuarial science and finance: theory[J]. Insurance: Mathematics and Economics, 2002, 31(1): 3-33. 

\bibitem{Dhaene et al. (2002b)}
Dhaene J, Denuit M, Goovaerts M J, et al. The concept of comonotonicity in actuarial science and finance: applications[J]. Insurance: Mathematics and Economics, 2002, 31(2): 133-161. 

\bibitem{DVG2005}
Dhaene J, Vanduffel S, Goovaerts M J, et al. Comonotonic approximations for optimal portfolio selection problems[J]. Journal of Risk and Insurance, 2005, 72(2): 253-300.

\bibitem{Dhaene et al. (2006)}
Dhaene J, Vanduffel S, Goovaerts M J, et al. Risk measures and comonotonicity: a review[J]. Stochastic models, 2006, 22(4): 573-606.

\bibitem{DHL2008}
Dhaene J, Henrard L, Landsman Z, et al. Some results on the CTE-based capital allocation rule[J]. Insurance: Mathematics and Economics, 2008, 42(2): 855-863.

\bibitem{DWY2000}
Dhaene J, Wang S, Young V R, et al. Comonotonicity and maximal stop-loss premiums[J]. Bulletin of the Swiss Association of Actuaries, 2000, 2: 99-113.

\bibitem{D1990}
Dufresne D. The distribution of a perpetuity, with applications to risk theory and pension funding[J]. Scandinavian Actuarial Journal, 1990, 1990(1): 39-79.

\bibitem{FZ2008a}
Furman E, Zitikis R. Weighted premium calculation principles[J]. Insurance: Mathematics and Economics, 2008, 42(1): 459-465.

\bibitem{FZ2008b}
Furman E, Zitikis R. Weighted risk capital allocations[J]. Insurance: Mathematics and Economics, 2008, 43(2): 263-269.

\bibitem{FZ2009}
Furman E, Zitikis R. Weighted pricing functionals with applications to insurance: an overview[J]. North American Actuarial Journal, 2009, 13(4): 483-496.

\bibitem{FL2005}
Furman E, Landsman Z. Risk capital decomposition for a multivariate dependent gamma portfolio[J]. Insurance: Mathematics and Economics, 2005, 37(3): 635-649.

\bibitem{FL2008}
Furman E, Landsman Z. Economic capital allocations for non-negative portfolios of dependent risks[J]. ASTIN Bulletin: The Journal of the IAA, 2008, 38(2): 601-619.

\bibitem{KDG2000}
Kaas R, Dhaene J, Goovaerts M J. Upper and lower bounds for sums of random variables[J]. Insurance: Mathematics and Economics, 2000, 27(2): 151-168.

\bibitem{KGDD2008}
Kaas R, Goovaerts M, Dhaene J, et al. Modern actuarial risk theory: using R[M]. Berlin, Heidelberg: Springer Berlin Heidelberg, 2008.

\bibitem{KPW2012}
Klugman S A, Panjer H H, Willmot G E. Loss models: from data to decisions[M]. John Wiley \& Sons, 2012.

\bibitem{MH2011}
Mao T, Hu T. A new proof of Cheung’s characterization of comonotonicity[J]. Insurance: Mathematics and Economics, 2011, 48(2): 214-216.

\bibitem{M1997}
Milevsky M A. The present value of a stochastic perpetuity and the Gamma distribution[J]. Insurance: Mathematics and Economics, 1997, 20(3): 243-250.

\bibitem{MP1998}
Milevsky M A, Posner S E. Asian options, the sum of lognormals, and the reciprocal gamma distribution[J]. Journal of financial and quantitative analysis, 1998, 33(3): 409-422.

\bibitem{MR2000}
Milevsky M A, Robinson C. Self-annuitization and ruin in retirement[J]. North American Actuarial Journal, 2000, 4(4): 112-124.

\bibitem{PR1976}
Patil G P, Ord J K. On size-biased sampling and related form-invariant weighted distributions[J]. Sankhyā: The Indian Journal of Statistics, Series B, 1976: 48-61.

\bibitem{PR1978}
Patil G P, Rao C R. Weighted distributions and size-biased sampling with applications to wildlife populations and human families[J]. Biometrics, 1978: 179-189.

\bibitem{Qin2017}
Qin J. Biased sampling, over-identified parameter problems and beyond[M]. Singapore: Springer, 2017.

\bibitem{RS1995}
Rogers L C G, Shi Z. The value of an Asian option[J]. Journal of Applied Probability, 1995, 32(4): 1077-1088.

\bibitem{VCD2008}
Vanduffel S, Chen X, Dhaene J, et al. Optimal approximations for risk measures of sums of lognormals based on conditional expectations[J]. Journal of Computational and Applied Mathematics, 2008, 221(1): 202-218.

\bibitem{VDG2005a}
Vanduffel S, Dhaene J, Goovaerts M. On the evaluation of ‘saving-consumption’plans[J]. Journal of Pension Economics \& Finance, 2005, 4(1): 17-30. 

\bibitem{VDG2003}
Vanduffel S, Dhaene J, Goovaerts M, et al. The hurdle-race problem[J]. Insurance: Mathematics and Economics, 2003, 33(2): 405-413.

\bibitem{VHD2005b} 
Vanduffel S, Hoedemakers T, Dhaene J. Comparing approximations for risk measures of sums of nonindependent lognormal random variables[J]. North American Actuarial Journal, 2005, 9(4): 71-82. 

\bibitem{VGD2004} 
Vyncke D, Goovaerts M, Dhaene J. An accurate analytical approximation for the price of a European-style arithmetic Asian option[J]. Finance, 2004, 25(121–139): 113.

\bibitem{WD1998}
Wang S, Dhaene J. Comonotonicity, correlation order and premium principles[J]. Insurance: Mathematics and Economics, 1998, 22(3): 235-242. 

\end{thebibliography}
\end{document}